\newcommand{\NP}{{\ensuremath{\mathrm{NP}}}}
\newtheorem{theorem}{Theorem}[section]
\newtheorem{lemma}[theorem]{Lemma}
\newtheorem{corollary}[theorem]{Corollary}
\newtheorem{proposition}[theorem]{Proposition}
\newtheorem{observation}[theorem]{Observation}
\newtheorem{definition}[theorem]{Definition}
\title{Detecting strong cliques}
\author{Ademir Hujdurovi\'c\\
\small University of Primorska, IAM, Muzejski trg 2, SI6000 Koper, Slovenia\\
\small University of Primorska, FAMNIT, Glagolja\v ska 8, SI6000 Koper, Slovenia\\
\small \texttt{ademir.hujdurovic@upr.si}\\
\and
Martin Milani\v c\\
\small University of Primorska, IAM, Muzejski trg 2, SI6000 Koper, Slovenia\\
\small University of Primorska, FAMNIT, Glagolja\v ska 8, SI6000 Koper, Slovenia\\
\small \texttt{martin.milanic@upr.si}
\and
Bernard Ries\\
\small University of Fribourg, Department of Informatics, Bd de P\'erolles 90\\ \small CH-1700 Fribourg, Switzerland\\
\small \texttt{bernard.ries@unifr.ch}\\
}
\date{\today}
\begin{document}
\maketitle
\begin{abstract}
A strong clique in a graph is a clique intersecting every maximal independent set. We study the computational complexity of six algorithmic decision problems related to strong cliques in graphs
and almost completely determine their complexity in the classes of chordal graphs, weakly chordal graphs, line graphs and their complements, and graphs of maximum degree at most three. Our results rely on connections with matchings and relate to several graph properties studied in the literature, including well-covered graphs, localizable graphs, and general partition graphs.

\medskip
\noindent{\bf Keywords:}
strong clique; weakly chordal graph; line graph; localizable graph; cubic graph; subcubic graph; \NP-hardness; polynomial-time algorithm
\end{abstract}

%\newpage

\section{Introduction}

\subsection{Background}\label{sec:background}

A \emph{strong clique} in a graph $G$ is a clique intersecting every maximal independent set in $G$. Replacing the graph $G$ with its complement $\overline{G}$ maps the strong cliques of $G$ into the equivalent concept of \emph{strong independent sets} of $\overline{G}$, that is, independent sets intersecting every maximal clique in $\overline{G}$. The notions of strong cliques and strong independent sets in graphs played an important role in the study of perfect graphs and their subclasses (see, e.g.,~\cite{MR715895,MR778749,MR778765,MR888682,MR0439682}). Moreover, various other graph classes studied in the literature can be defined in terms of properties involving strong cliques~(see, e.g.,~\cite{MR3141630,MR3575013,MR3278773,HMR2018,MR2080087,MR1212874}). In some cases, strong cliques can be seen as a generalization of perfect matchings: transforming any regular triangle-free graph to the complement of its line graph maps any perfect matching into a strong clique (see~\cite{MR3488933,MT} for applications of this observation).

In this paper we report on advances regarding the computational complexity of several problems related to strong cliques in graphs. We pay particular attention to the class of graphs whose vertex set can be partitioned into strong cliques. Such graphs were called \emph{localizable} by Yamashita and Kameda~in 1999~\cite{MR1715546} and studied further by Hujdurovi\'c et al.~in 2018~\cite{HMR2018}. Localizable graphs form a rich class of \emph{well-covered graphs}, that is, graphs in which all maximal independent sets are of the same size. Moreover, localizable graphs and well-covered graphs coincide within the class of perfect graphs (see, e.g.,~\cite{HMR2018}). Well-covered graphs were introduced by Plummer in 1970~\cite{MR0289347} and have been studied extensively in the literature (see, e.g., the surveys by Plummer~\cite{MR1254158} and Hartnell~\cite{MR1677797}).

\begin{sloppypar}
Relatively few complexity results regarding problems involving strong cliques are available in the literature. Zang~\cite{MR1344757} showed that it is co-\NP-complete to test whether a given clique in a graph is strong and Ho{\`a}ng~\cite{MR1301855} established \NP-hardness of testing whether a given graph contains a strong clique. Hujdurovi\'c et al.~\cite{HMR2018} showed that recognizing localizable graphs is \NP-hard, even in the class of well-covered graphs whose complements are localizable. Polynomial-time recognition algorithms for localizable graphs are known within several classes of graphs, including triangle-free graphs, \hbox{$C_4$-free graphs}, and line graphs (see~\cite[Section 4.1]{HMR2018} for an overview). Results of Ho{\`a}ng~\cite{MR888682} and Burlet and Fonlupt~\cite{MR778765} imply the existence of a polynomial-time algorithm for determining if in every induced subgraph of a given graph $G$, each vertex belongs to a strong clique.
On the other hand, to the best of our knowledge, the complexity status of recognizing several graph classes related to strong cliques is in general still open. In particular, this is the case for: (i) the problem of recognizing {\em strongly perfect graphs}~\cite{MR715895,MR778749}, introduced by Berge as graphs every induced subgraph of which has a strong independent set,
(ii) the problem of determining whether \emph{every} edge of a given graph is contained in a strong clique or, equivalently, the problem of recognizing \emph{general partition graphs} (see, e.g.,~\cite{MR2080087,MR1212874,MR2794315}), and
(iii) the problem of recognizing {\em CIS graphs}, defined as graphs in which \emph{every} maximal clique is strong, or, equivalently, as graphs in which every maximal independent set is strong (see, e.g.,~\cite{MR3141630,MR2489416,MR3278773,MR2755907,MR2496915}).
\end{sloppypar}

In this paper we report on several advances regarding the computational complexity of six decision problems related to strong cliques. The problems are formally stated as follows.
%\medskip
%\noindent{\bf Six problems related to strong cliques.}

\begin{center}
\fbox{\parbox{0.85\linewidth}{\noindent
{\sc Strong Clique}\\[.8ex]
\begin{tabular*}{.93\textwidth}{rl}
{\em Input:} & A graph $G$ and a clique $C$ in $G$.\\
{\em Question:} & Is $C$ strong?
\end{tabular*}
}}
\end{center}

\begin{center}
\fbox{\parbox{0.85\linewidth}{\noindent
{\sc Strong Clique Existence}\\[.8ex]
\begin{tabular*}{.93\textwidth}{rl}
{\em Input:} & A graph $G$.\\
{\em Question:} & Does $G$ have a strong clique?
\end{tabular*}
}}
\end{center}

\begin{center}
\fbox{\parbox{0.85\linewidth}{\noindent
{\sc Strong Clique Vertex Cover}\\[.8ex]
\begin{tabular*}{.93\textwidth}{rl}
{\em Input:} & A graph $G$.\\
{\em Question:} & Is every vertex of $G$ contained in a strong clique?
\end{tabular*}
}}
\end{center}

\begin{center}
\fbox{\parbox{0.85\linewidth}{\noindent
{\sc Strong Clique Edge Cover} (a.k.a.~{\sc General Partitionability})\\[.8ex]
\begin{tabular*}{.93\textwidth}{rl}
{\em Input:} & A graph $G$.\\
{\em Question:} & Is every edge of $G$ contained in a strong clique?
\end{tabular*}
}}
\end{center}

\begin{center}
\fbox{\parbox{0.85\linewidth}{\noindent
{\sc Strong Clique Partition} \\[.8ex]
\begin{tabular*}{.93\textwidth}{rl}
{\em Input:} & A graph $G$ and a partition of its vertex set into
cliques.\\
{\em Question:} & Is every clique in the partition strong?
\end{tabular*}
}}
\end{center}

\begin{center}
\fbox{\parbox{0.85\linewidth}{\noindent
{\sc Strong Clique Partition Existence} (a.k.a.~{\sc Localizability})\\[.8ex]
\begin{tabular*}{.93\textwidth}{rl}
{\em Input:} & A graph $G$.\\
{\em Question:} & Can the vertex set of $G$ be partitioned into strong cliques?
\end{tabular*}
}}
\end{center}

In the rest of the paper, we will refer to the above six problems simply as the `six strong clique problems'.

\subsection{Motivation and overview of results}

\begin{sloppypar}
Our study has many sources of motivation. First, as mentioned above, problems {\sc Strong Clique}, {\sc Strong Clique Existence}, and {\sc Strong Clique Partition Existence} (a.k.a.~{\sc Localizability}) are known to be \NP-hard (see~\cite{MR1344757}, \cite{MR1301855}, and~\cite{HMR2018}, respectively). On the other hand, to the best of our knowledge, the complexity of the remaining three problems has not been determined in the literature. One of the aims of this paper is to partially close this gap and, more generally, to obtain further complexity and algorithmic results on the six strong clique problems in specific classes of graphs.
\end{sloppypar}

\begin{sloppypar}
Our work is also motivated by the general quest for finding further applications of connections between strong cliques and other graph theoretic concepts, in particular:
%. In particular, we have in mind two types of connections:
(i) of the fact that localizable graphs and well-covered graphs coincide within the class of perfect graphs, as observed in~\cite{HMR2018}, and (ii) of connections between strong cliques and matchings -- which naturally appear in the study of line graphs and their complements\hbox{~\cite{MR3488933,MR3575013,HMR2018,MR3162288,MT}}.

Regarding (i), we further elaborate on the approach used by Chv\'atal and Slater~\cite{MR1217991} and Sankaranarayana and Stewart~\cite{MR1161178} for proving hardness of recognizing well-covered graphs within the class of weakly chordal graphs (which are perfect), to derive similar results for {\sc Strong Clique Vertex Cover} and {\sc Strong Clique Partition}, two of the three problems from the above list whose complexity status was unknown prior to this work. These results are presented in~\Cref{sec:weakly-chordal}.

Many of our results motivated by (ii) are based on the study of {\sc Strong Clique Extension}, an auxiliary problem which we introduce in~\Cref{sec:a-problem}. We show that in any class of graphs, each of the first five of the six strong clique problems (as listed above) polynomially reduces to {\sc Strong Clique Extension}.
Results motivated by (ii) can be roughly divided into two parts.
\begin{itemize}
  \item We show that {\sc Strong Clique Extension} in either line graphs or their complements polynomially reduces to the maximum-weight matching problem. This implies polynomial-time solvability of the first five strong clique problems from the above list in the classes of line graphs and their complements. This is a significant generalization of the known fact that one of these problems, {\sc Strong Clique Edge Cover} (a.k.a.~{\sc General Partitionability}), is polynomial-time solvable in the class of line graphs, as proved by Levit and Milani\v{c}~\cite{MR3162288}. Exploiting further the connection with matchings, we show that
      {\sc Strong Clique Partition Existence} (a.k.a.~{\sc Localizability}) is \NP-complete in the class of complements of line graphs.
      Together with a result from Hujdurovi\'c et al.~\cite{HMR2018}, our results completely determine the computational complexity of the six strong clique problems from the above list in the classes of line graphs and their complements. These results are presented in~\Cref{sec:co-line}.

  \item We show that when restricted to subcubic graphs, {\sc Strong Clique Partition Existence} (a.k.a.~{\sc Localizability}) can be reduced to testing the existence of a perfect matching in a derived graph. We further complement this result, presented in~\Cref{sec:max3}, with a polynomial-time algorithm for {\sc Strong Clique Extension}, and consequently for each of the remaining five strong clique problems, in any class of graphs of bounded maximum degree, or, more generally, of bounded clique number.
\end{itemize}
\end{sloppypar}

\begin{sloppypar}
Together with some previous results, our approach using the {\sc Strong Clique Extension} problem also leads to a complete classification of the complexity of the six strong clique problems in the class of chordal graphs, a well-known subclass of weakly chordal graphs. In contrast with the hardness results for the class of weakly chordal graphs, all the six strong clique problems are solvable in polynomial time in the class of chordal graphs (see~\Cref{sec:a-problem}).
\end{sloppypar}

\begin{sloppypar}
Finally, our work is motivated by the question about characterizations of localizable graphs raised by Yamashita and Kameda~\cite{MR1715546}.
One of our results (\Cref{thm:complements-of-line-of-triangle-free}) shows that the property of localizability is difficult to recognize not only in the classes of weakly chordal graphs and well-covered graphs, as shown in~\cite{HMR2018}, but also in the class of complements of line graphs. Furthermore, as a corollary of our approach presented in~\Cref{sec:co-line}, we show that {\sc Strong Clique Partition Existence} (a.k.a.~{\sc Localizability}) is \NP-complete in the class of graphs of independence number $k$, for every fixed $k\ge 3$. On the positive side, we enhance our polynomial-time algorithm for testing localizability in the class of subcubic graphs by showing that within cubic graphs, the property coincides with the condition that every vertex is in a strong clique and classify graphs with these properties (Theorem \ref{thm:3-regular-improved}).
\end{sloppypar}

\medskip

In \Cref{table-results} we summarize the known and newly obtained complexity results for the six strong clique problems in general graphs and in special graph classes studied in this paper.

\begin{table}[h!]
%\begin{sidewaystable}
\makebox[\textwidth][c]{
 \footnotesize
\renewcommand{\arraystretch}{1.2}
\begin{tabular}{|l|c|c|c|c|c|c|}
  \hline
& all & chordal & weakly chordal & line & complements & subcubic \\
& graphs & graphs & graphs & graphs & of line graphs & graphs \\
  \hline
  % after \\: \hline or \cline{col1-col2} \cline{col3-col4} ...
  {\sc SC} & coNPc~\cite{MR1344757} & P$^\ast$ & coNPc~\cite{MR1344757} & {P} & {P} & P \\
  \hline
  {\sc SC Existence} & \NP-hard~\cite{MR1301855} & P$^\ast$ & {\NP-hard} & {P} & {P} &  P \\
  \hline
  {\sc SC Vertex Cover} & {\NP-hard} & P$^\ast$ & {\NP-hard} & {P} & {P} & P\\
  \hline
  {\sc SC Edge Cover} & ? & P$^\ast$~\cite{MR1477536}  & ? & P~\cite{MR3162288} & {P} & P \\
  \hline
  {\sc SC Partition} & {coNPc} & P$^\ast$  & {coNPc} & {P} & {P} &  P\\
  \hline
  {\sc SC Partition Existence} & coNPc~\cite{HMR2018} & P~\cite{HMR2018,MR1368737} & coNPc~\cite{HMR2018}  & P~\cite{HMR2018} & {\NP-complete} &  P \\
  \hline
\end{tabular}
}
\caption{Summary of complexity results for the six strong clique problems in various graph classes. SC stands for {\sc Strong Clique}, P for polynomial-time solvable, and coNPc for co-\NP-complete. Unreferenced entries mark results obtained in this paper. Entries for chordal graphs marked with an asterisk follow from~\Cref{cor:C4-free}. Results for weakly chordal graphs are presented in~\Cref{sec:weakly-chordal}.
Results for line graphs and their complements are presented in~\Cref{sec:co-line}. Results for subcubic graphs are presented in~\Cref{sec:max3}.}\label{table-results}
\end{table}
%\end{sidewaystable}

%====================================================================================

\section{Preliminaries}\label{sec:preliminaries}

In this section we recall some definitions, fix the notation, and
state some known results on line graphs and on localizable graphs.
We consider only finite, simple, and undirected graphs. Given a graph $G=(V,E)$, its complement $\overline{G}$ is the graph with vertex set $V$ in which two distinct vertices are adjacent if and only if they are non-adjacent in $G$. By $K_n$, $P_n$, and $C_n$ we denote as usual the $n$-vertex complete graph, path, and cycle, respectively, and by $K_{m,n}$ the complete bipartite graph with parts of sizes $m$ and $n$. The \emph{degree} of a vertex $v$ in a graph $G$ is denoted by $d_G(v)$, its neighborhood by $N_G(v)$ (or simply by $N(v)$ if the graph is clear from the context), and its closed neighborhood by $N_G[v]$ (or simply by $N[v]$). For a set of vertices $X\subseteq V(G)$, we denote by $N_G(X)$ (or $N(X)$) the set of all vertices in $V(G)\setminus X$ having a neighbor in $X$. The fact that two graphs $G$ and $H$ are isomorphic will be denoted by $G \cong H$. The minimum (resp.~maximum) degree of a vertex in a graph $G$ is denoted by $\delta(G)$ (resp.,~$\Delta(G)$). A graph is \emph{$k$-regular} if $\delta(G) = \Delta(G) = k$, and \emph{regular} if it is $k$-regular for some $k$. A graph is \emph{cubic} if it is $3$-regular and \emph{subcubic} if it is of maximum degree at most $3$.
The disjoint union of two vertex-disjoint graphs $G$ and $H$, denoted by $G+H$, is defined as the graph with vertex set $V(G)\cup V(H)$ and edge set $E(G)\cup E(H)$.
%We also write $nG$ for the disjoint union of $n$ copies of $G$.

For a graph $H$, we say that a graph $G$ is \emph{$H$-free} if $G$ has no induced subgraph isomorphic to $H$. A graph is said to be \emph{chordal} if it does not contain any induced cycle of length at least four. The \emph{line graph} of a graph $G=(V,E)$ is the graph $L(G)$ with vertex set $E$ in which two distinct vertices are adjacent if and only if they have a common endpoint as edges of $G$. The following result about line graphs will be useful in~\Cref{sec:co-line}.

\begin{theorem}[Roussopoulos~\cite{MR0424435} and Lehot~\cite{MR0347690}]\label{thm:line}
There exists a linear-time algorithm that tests if a given graph $G$ is (isomorphic to) the line graph of some graph $H$ and, if this the case, computes a graph $H$ such that $G \cong L(H)$.
%There exists a linear-time algorithm that tests if a given graph $G$ is the line graph of some graph $H$ and, if this the case, computes a graph $H$ such that $G = L(H)$.
\end{theorem}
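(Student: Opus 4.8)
The plan is to reconstruct the root graph locally, using the fact that line graphs admit a \emph{Krausz partition}: a graph $G$ is (isomorphic to) a line graph if and only if its edge set can be partitioned into cliques $\mathcal{Q}$ such that every vertex of $G$ lies in at most two members of $\mathcal{Q}$. Given such a partition, a root graph $H$ with $L(H)\cong G$ is obtained by taking one vertex of $H$ for each clique in $\mathcal{Q}$, adding one private extra vertex for every vertex of $G$ lying in only one clique of $\mathcal{Q}$, and turning each vertex $e$ of $G$ into the edge of $H$ joining the (at most two) cliques/private vertices to which $e$ is assigned; conversely, the vertices of degree at least two in any graph $H$ induce such a partition of $E(L(H))$. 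So the whole task reduces to deciding whether a Krausz partition of $G$ exists, and, if so, producing one in linear time. (One could also invoke Beineke's nine forbidden induced subgraphs to recognize line graphs, but that route does not obviously give a linear-time algorithm nor the root graph, so I discard it.)

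The core subroutine exploits that in $L(H)$, if a vertex $v$ corresponds to an edge $xy$ of $H$, then $N_{L(H)}(v)$ is the union of the two cliques ``edges at $x$'' and ``edges at $y$'' (each minus $v$). Hence in any line graph the subgraph $G[N(v)]$ is a union of at most two cliques, and the way it splits determines, up to the one genuinely ambiguous configuration, which two parts of $\mathcal{Q}$ contain $v$. I would first dispose of degenerate cases by inspection: small graphs (at most four vertices), isolated vertices and vertices of degree one in $G$ (the latter forcing private endpoints of $H$), and in particular $K_3$, the unique connected graph whose line graph equals that of a non-isomorphic graph ($L(K_3)=L(K_{1,3})$). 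Otherwise, pick a vertex $v$, compute a partition of $N(v)$ into at most two cliques — forced up to swapping when $G[N(v)]$ is not itself one clique, and otherwise disambiguated using a neighbour of $v$ via the standard ``odd versus even triangle'' distinction — and then propagate: once the two parts at $v$ are fixed, they constrain the parts at each neighbour of $v$. This yields a candidate partition $\mathcal{Q}$; finally one verifies in linear time that $\mathcal{Q}$ really is a partition of $E(G)$ into cliques with every vertex in at most two parts, and rejects otherwise. Whitney's isomorphism theorem guarantees that, away from the $K_3$ exception, the resulting $H$ is essentially unique, so any globally consistent set of local choices succeeds.

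The main obstacle is the linear time bound rather than mere polynomiality: naively inspecting $G[N[v]]$ and testing clique-ness for every $v$ costs $\sum_v d_G(v)^2$, which can be superlinear. The fix (this is exactly the content of the Roussopoulos and Lehot algorithms) is never to re-examine a neighbourhood from scratch — process vertices in an order in which the clique assignment at the current vertex is already partially pinned down by previously processed neighbours, so only $O(d_G(v))$ new work is needed per vertex — together with adjacency data structures allowing an edge-membership query in $O(1)$, so that all local clique tests amortize to $O(|V(G)|+|E(G)|)$. The remaining difficulty is purely bookkeeping: carefully enumerating the degenerate configurations and checking that the local clique choices made at different vertices agree globally; I expect this case analysis, rather than any deep structural fact, to be where the effort concentrates.
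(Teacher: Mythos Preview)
The paper does not prove this theorem: it is stated in the Preliminaries section as a known result, attributed to Roussopoulos and Lehot, and used later as a black box. So there is no ``paper's own proof'' to compare against. Your proposal is a reasonable high-level sketch of the ideas behind those two algorithms (Krausz partition, local neighbourhood decomposition, odd/even triangle disambiguation, propagation with amortized linear cost), but for the purposes of this paper none of that is needed --- a citation suffices.
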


For a positive integer $k$, a \emph{$k$-coloring} of $G$ is a mapping $c: V\rightarrow\{1,2,\ldots,k\}$ such that $c(u)\neq c(v)$ whenever $uv\in E$.  The \emph{chromatic number} $\chi(G)$ is the smallest integer $k$ for which $G$ has a $k$-coloring. A subset of edges $M\subseteq E$ is called a \textit{matching} if no two edges of $M$ share a common end-vertex. The \emph{chromatic index} of a graph $G$ is denoted by $\chi'(G)$ and defined as the smallest number of matchings of $G$ the union of which is $E$; clearly,
$\chi'(G)= \chi(L(G))$. A matching $M$ in a graph $G$ is \emph{perfect} if every vertex of $G$ is an endpoint of an edge in $M$. For undefined terms related to matchings, we refer the reader to~\cite{MR859549}.

Let $G=(V,E)$ be a graph. A subset $C\subseteq V$ is called a \emph{clique} in $G$ if any two vertices in $C$ are adjacent to each other.  The \emph{clique number} $\omega(G)$ is the number of vertices in a maximum clique of $G$. A \emph{triangle} is the graph $K_3$. We will often identify a triangle with the set of its edges; whether we consider a triangle as a set of vertices or as a set of edges will always be clear from the context. A subset $I\subseteq V$ is called an \emph{independent set} in $G$ if any two vertices in $I$ are non-adjacent to each other.  The \emph{independence number} $\alpha(G)$ is the number of vertices in a maximum independent set of $G$. Given a clique $C$ in $G$ and a set $S\subseteq V(G)\setminus C$, we say that clique $C$ is \emph{dominated by $S$} if $C\subseteq N_G(S)$.
The following is an easy but useful characterization of strong cliques (see, e.g., remarks following~\cite[Theorem 2.3]{HMR2018}).

\begin{lemma}\label{lem:strong-cliques}
A clique $C$ in a graph $G$ is not strong if and only if it is
dominated by an independent set $I \subseteq V(G)\setminus C$.
\end{lemma}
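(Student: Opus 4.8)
The plan is to unwind the definition of a strong clique and exploit the basic fact that a maximal independent set dominates every vertex outside it. Recall that, by definition, $C$ is \emph{not} strong precisely when there exists a maximal independent set $I$ of $G$ with $I\cap C=\emptyset$; the lemma asserts that the existence of such an $I$ is equivalent to $C$ being dominated by an independent set avoiding $C$.

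For the forward direction, I would start from a maximal independent set $I$ with $I\cap C=\emptyset$, which exists since $C$ is not strong. Maximality of $I$ means every vertex of $V(G)\setminus I$ has a neighbor in $I$. Applying this to the vertices of $C$, all of which lie outside $I$, gives $C\subseteq N_G(I)$. Since also $I\subseteq V(G)\setminus C$, the clique $C$ is dominated by the independent set $I\subseteq V(G)\setminus C$, as required.

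For the converse, suppose $C$ is dominated by some independent set $I\subseteq V(G)\setminus C$, so $C\subseteq N_G(I)$. I would extend $I$ to a maximal independent set $I'$ of $G$; this is always possible. The key step is to verify that $I'\cap C=\emptyset$. If some vertex $v\in I'$ belonged to $C$, then, since $C\subseteq N_G(I)$, the vertex $v$ would have a neighbor $u\in I\subseteq I'$, contradicting the independence of $I'$. Hence $I'$ is a maximal independent set disjoint from $C$, which by definition means $C$ is not strong.

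I do not anticipate any genuine obstacle: the whole argument uses only the definitions of clique, independent set, maximality, and strong clique. The single point deserving a moment's care is that in the converse direction one must not assume the enlarged set $I'$ still dominates $C$ — and indeed we do not need that, since we only invoke domination by the original set $I$ to derive the contradiction.
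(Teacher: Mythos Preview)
Your proof is correct and is precisely the natural argument: a maximal independent set disjoint from $C$ dominates $C$ by maximality, and conversely any independent set dominating $C$ extends to a maximal one still disjoint from $C$. The paper itself does not supply a proof of this lemma but merely cites it as an easy observation (referring to remarks following Theorem~2.3 in~\cite{HMR2018}), so there is nothing further to compare.
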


A set $D\subseteq V$ is called \emph{dominating} if every vertex in $V\setminus D$ has at least one neighbor in $D$. If $D$ is in addition an independent set, it is called an \emph{independent dominating set}. We denote by $i(G)$ the \emph{independent domination number} of $G$, that is, the minimum size of an independent dominating set in $G$. Furthermore, we denote by $\theta(G)$ the \emph{clique cover number} of $G$, that is, the minimum number of cliques that partition its vertex set. It is well-known that every graph $G$ has $\alpha(G) = \omega(\overline{G})$, $\theta(G) = \chi(\overline{G})$, $\chi(G)\ge \omega(G)$, and $\theta(G)\ge \alpha(G)$. It follows that every graph $G$ satisfies the following chain of inequalities:
\begin{equation*}
i(G)\le \alpha(G)\le \theta(G)\,.
\end{equation*}
Clearly, a graph $G$ is well-covered if and only if $i(G) = \alpha(G)$.

For a positive integer $k$, we say that a graph $G$ is {\em $k$-localizable} if it admits a partition of its vertex set into $k$ strong cliques. A graph $G$ is said to be \emph{semi-perfect} if $\theta(G)= \alpha(G)$, that is, if there exists a collection of $\alpha(G)$ cliques partitioning its vertex set. We will refer to such a collection as an \emph{$\alpha$-clique cover} of $G$. Thus, $G$ is semi-perfect if and only if it has an $\alpha$-clique cover, and $G$ is $\alpha(G)$-localizable if and only if it has a $\alpha$-clique cover in which every clique is strong.

In the following theorem we recall several equivalent formulations of localizability.

\begin{theorem}[Hujdurovi\'c et al.~\cite{HMR2018}]\label{prop:alpha-chi-bar}
For every graph $G$, the following statements are equivalent.
\begin{enumerate}
  \item[(a)] $G$ is localizable.
  \item[(b)] $G$ is $\alpha(G)$-localizable (equivalently, $G$ has an $\alpha$-clique cover in which every clique is strong).
  \item[(c)] $G$ has an $\alpha$-clique cover and every clique in every $\alpha$-clique cover of $G$ is strong.
  \item[(d)] $G$ is well-covered and semi-perfect.
  \item[(e)] $i(G) = \theta(G)$.
\end{enumerate}
\end{theorem}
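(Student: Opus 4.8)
The plan is to establish the cycle of implications $(a)\Rightarrow(d)\Rightarrow(c)\Rightarrow(b)\Rightarrow(a)$ together with the equivalence $(d)\Leftrightarrow(e)$. The single fact driving everything is the following observation: if $\mathcal{C}=\{C_1,\dots,C_k\}$ is a partition of $V(G)$ into strong cliques and $M$ is \emph{any} maximal independent set of $G$, then $M$ meets each $C_i$ in \emph{exactly} one vertex --- at least one because $C_i$ is strong, at most one because $C_i$ is a clique --- and hence $|M|=k$. Consequently a localizable graph is well-covered, every partition of its vertex set into strong cliques has size exactly $\alpha(G)$, and any such partition is in particular an $\alpha$-clique cover. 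This immediately gives $(a)\Rightarrow(d)$ (well-covered and semi-perfect) and also $(a)\Rightarrow(b)$ (the partition witnessing $(a)$ is an $\alpha$-clique cover with all cliques strong).

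For $(d)\Rightarrow(c)$ I would argue as follows. Semi-perfectness of $G$ is, by definition, exactly the existence of an $\alpha$-clique cover, so the first half of $(c)$ is for free; it remains to show that every clique in every $\alpha$-clique cover $\{C_1,\dots,C_{\alpha(G)}\}$ of $G$ is strong. If some $C_i$ were not strong, there would be a maximal independent set $M$ disjoint from $C_i$ (equivalently, by \Cref{lem:strong-cliques}, $C_i$ is dominated by an independent set in $V(G)\setminus C_i$, which extends to such an $M$). Since $G$ is well-covered, $|M|=\alpha(G)$, but $|M|=\sum_{j}|M\cap C_j|\le(\alpha(G)-1)+0<\alpha(G)$ because each $C_j$ is a clique and $M\cap C_i=\emptyset$ --- a contradiction. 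The remaining two arrows are immediate: $(c)$ supplies an $\alpha$-clique cover all of whose cliques are strong, which is a partition of $V(G)$ into strong cliques, so $(c)\Rightarrow(b)$ and $(b)\Rightarrow(a)$.

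Finally, for $(d)\Leftrightarrow(e)$ I would invoke the chain $i(G)\le\alpha(G)\le\theta(G)$ recalled before the theorem: $i(G)=\theta(G)$ forces $i(G)=\alpha(G)=\theta(G)$, i.e.\ $G$ is well-covered ($i(G)=\alpha(G)$) and semi-perfect ($\alpha(G)=\theta(G)$), and conversely these two equalities yield $i(G)=\alpha(G)=\theta(G)$, that is, $(e)$. I do not anticipate any genuine obstacle in this proof: the only step that requires any thought is the ``exactly one vertex per clique'' counting in the key observation, together with its mirror image in the proof of $(d)\Rightarrow(c)$; everything else is bookkeeping with the definitions of \emph{well-covered}, \emph{semi-perfect}, and \emph{$\alpha$-clique cover} and with the previously stated inequalities $i(G)\le\alpha(G)\le\theta(G)$.
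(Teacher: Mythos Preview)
Your argument is correct and is essentially the standard proof of this equivalence. Note, however, that the present paper does not actually prove \Cref{prop:alpha-chi-bar}: it is stated in the preliminaries as a result quoted from~\cite{HMR2018}, so there is no in-paper proof to compare against. Your cycle $(a)\Rightarrow(d)\Rightarrow(c)\Rightarrow(b)\Rightarrow(a)$ together with $(d)\Leftrightarrow(e)$ via the chain $i(G)\le\alpha(G)\le\theta(G)$ is exactly the natural route, and the two counting observations you single out (each maximal independent set hits each clique of a strong-clique partition exactly once; and in an $\alpha$-clique cover of a well-covered graph a missed clique forces $|M|<\alpha(G)$) are precisely the substantive steps.
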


A graph $G$ is \emph{perfect} if $\chi(H) = \omega(H)$ holds for every induced subgraph $H$ of $G$. Since the complementary graph $\overline{G}$ is perfect whenever $G$ is perfect~\cite{MR0302480}, every perfect graph $G$ is also semi-perfect. The following consequence of~\Cref{prop:alpha-chi-bar} will be useful.

\begin{corollary}\label{cor:perfect}
A perfect graph is well-covered if and only if it is localizable.
\end{corollary}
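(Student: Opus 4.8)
The plan is to derive the corollary directly from the equivalence of statements (a) and (d) in~\Cref{prop:alpha-chi-bar}, using the observation recorded immediately above the corollary that every perfect graph is semi-perfect. No new combinatorial work is needed; the statement is essentially a repackaging of part of~\Cref{prop:alpha-chi-bar} specialized to perfect graphs.

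For the implication that a well-covered perfect graph is localizable, I would argue as follows. Let $G$ be a perfect graph that is well-covered. By the Perfect Graph Theorem, $\overline{G}$ is perfect as well, and hence $\theta(G) = \chi(\overline{G}) = \omega(\overline{G}) = \alpha(G)$, so $G$ is semi-perfect. Since $G$ is both well-covered and semi-perfect, condition~(d) of~\Cref{prop:alpha-chi-bar} holds, and therefore $G$ is localizable. For the converse implication, perfection is not even required: if $G$ is localizable, then the implication (a)~$\Rightarrow$~(d) of~\Cref{prop:alpha-chi-bar} immediately yields that $G$ is well-covered. Combining the two directions proves the corollary.

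I do not expect any genuine obstacle here; the only point that deserves explicit mention is that the semi-perfectness of perfect graphs rests on the Perfect Graph Theorem ($\overline{G}$ perfect whenever $G$ is perfect), which has already been invoked in the paragraph preceding the corollary. One could also phrase the whole argument in a single sentence by citing condition~(d) of~\Cref{prop:alpha-chi-bar} and the fact that ``well-covered'' for perfect graphs is equivalent to ``well-covered and semi-perfect'', but splitting it into the two implications above makes the (asymmetric) role of perfection transparent.
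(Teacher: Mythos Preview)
Your proposal is correct and matches the paper's approach exactly: the paper states the corollary as an immediate consequence of \Cref{prop:alpha-chi-bar}, having just observed (via the Perfect Graph Theorem) that every perfect graph is semi-perfect, which is precisely the argument you spell out.
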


%====================================================================================

\section{A hardness proof and its implications}
\label{sec:weakly-chordal}

An important subclass of perfect graphs generalizing the class of chordal graphs is the class of weakly chordal graphs. A graph is \emph{weakly chordal} if neither $G$ nor its complement contain an induced cycle of length at least~$5$. Chv\'atal and Slater~\cite{MR1217991} and Sankaranarayana and Stewart~\cite{MR1161178} proved the following hardness result.

\begin{theorem}\label{thm:wc-rec-NP-hard}
The problem of recognizing well-covered graphs is co-\NP-complete, even for weakly chordal graphs.
\end{theorem}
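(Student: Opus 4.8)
The plan is to establish co-\NP-membership and co-\NP-hardness separately, with essentially all the work going into hardness. For membership, I would observe that a certificate of non-membership is simply a pair of maximal independent sets of different sizes, which can be verified in polynomial time; hence recognizing well-covered graphs is in co-\NP\ (in the whole class of graphs, so a fortiori within weakly chordal graphs). The substance of the theorem is the hardness direction, and here I would follow the approach of Chv\'atal and Slater and of Sankaranarayana and Stewart, reducing from a suitable \NP-hard problem --- the natural candidate being whether a graph has an independent set of a given size $k$ (equivalently, the complement of well-coveredness is witnessed by a ``small'' maximal independent set, so one reduces the existence of an independent set of size exceeding some threshold).

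The key construction step is, given an instance $(H,k)$ of \textsc{Independent Set}, to build in polynomial time a weakly chordal graph $G$ such that $G$ is well-covered if and only if $H$ has \emph{no} independent set of size $k$ (or some analogous threshold condition). The standard gadget idea is to take $H$, attach a pendant-like structure or a carefully chosen fixed graph to each vertex so that every maximal independent set of the resulting graph decomposes into a maximal independent set of (a copy of) $H$ together with forced choices in the gadgets, thereby making the sizes of maximal independent sets of $G$ correspond bijectively to sizes of maximal independent sets of $H$. One then arranges, by a disjoint-union or join trick, that all maximal independent sets have the same size precisely when the maximum and minimum sizes of maximal independent sets of $H$ coincide, which is the co-\NP-hard condition. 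The crucial extra requirement --- and the reason the construction must be done with care --- is that the gadget and the way it is glued to $H$ must not create any induced $C_k$ with $k \geq 5$ in $G$ or in $\overline{G}$; typically one uses gadgets built from edges, triangles, and complete bipartite pieces, which are weakly chordal and interact with $H$ only through short chords, so that weak chordality is preserved regardless of the structure of $H$.

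The main obstacle I expect is precisely this weak-chordality certification: verifying that for \emph{every} input graph $H$ the derived graph $G$ is weakly chordal requires a somewhat delicate case analysis of potential long induced cycles in $G$ and in $\overline{G}$, since $H$ itself is arbitrary and can contain long induced cycles. The trick is to ensure the reduction ``hides'' $H$ well enough --- e.g.\ by replacing each vertex of $H$ with a module or by adding universal-type vertices to the gadgets --- that any long cycle passing through the $H$-part necessarily acquires a chord from the gadget; this is where the concrete choice of gadget matters and where the argument is most technical. Once the construction is in place, the equivalence ``$G$ well-covered $\iff$ $H$ has no independent set of the critical size'' is a routine verification from the correspondence between maximal independent sets of $G$ and of $H$, and combined with co-\NP-membership this yields co-\NP-completeness within weakly chordal graphs.

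\medskip
\noindent\emph{Remark.} Since this is a known result with a published proof, the cleanest exposition is likely to cite the constructions of~\cite{MR1217991,MR1161178} and reproduce whichever of the two is more directly reusable for the subsequent adaptations to \textsc{Strong Clique Vertex Cover} and \textsc{Strong Clique Partition} in this section; the proof sketch above indicates the template those adaptations will build on.
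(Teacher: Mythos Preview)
Your proposal diverges from the approach that the cited papers~\cite{MR1217991,MR1161178} actually use, and the divergence matters for two reasons.

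First, the published reductions are from \textsc{$3$-SAT}, not from \textsc{Independent Set}. Given variables $X=\{x_1,\dots,x_n\}$ and clauses $\mathcal{C}=\{C_1,\dots,C_m\}$, one builds $G$ on vertex set $C\cup L$ where $C=\{c_1,\dots,c_m\}$ is a clique, $L=\{x_1,\overline{x_1},\dots,x_n,\overline{x_n}\}$ with each pair $\{x_i,\overline{x_i}\}$ an edge, and $c_j\ell\in E(G)$ iff clause $C_j$ contains literal $\ell$. Then $\varphi$ is satisfiable iff $G$ is not well-covered. Because this graph is built from scratch with a completely explicit structure (a clique, a matching, and bipartite connections between them), weak chordality can be checked directly; there is no ``arbitrary input graph $H$'' whose long induced cycles need to be neutralized.

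Second, your proposed route has a real gap exactly where you flag the ``main obstacle.'' If you start from an arbitrary graph $H$ for \textsc{Independent Set}, then $H$ may already contain induced $C_5$, $C_7$, etc., and your suggestions (modules, universal vertices, pendant gadgets) do not in general destroy these cycles while simultaneously preserving the independent-set correspondence you need. You acknowledge this but do not resolve it; the $3$-SAT construction sidesteps the issue entirely by never importing an uncontrolled graph.

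Finally, note that the paper's own contribution in this section (the hardness of \textsc{Strong Clique Vertex Cover}, \textsc{Strong Clique Partition}, and \textsc{Strong Clique Existence}) is obtained by reusing and modifying this specific $3$-SAT graph $G$: one argues that the clause clique $C$ is strong iff $\varphi$ is unsatisfiable, that each literal edge $\{x_i,\overline{x_i}\}$ is automatically strong, and so on. An \textsc{Independent Set} reduction, even if it could be made weakly chordal, would not feed into those arguments.
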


Since weakly chordal graphs are perfect, \Cref{cor:perfect} implies that a weakly chordal graph is well-covered if and only if it is localizable. Thus,
\Cref{thm:wc-rec-NP-hard} yields the following.

\begin{sloppypar}
\begin{theorem}[Hujdurovi\'{c} et al.~\cite{HMR2018}]\label{thm:weakly-chordal}
{\sc Strong Clique Partition Existence} (a.k.a.~{\sc Localizability}) is co-\NP-complete in the class of weakly chordal graphs.
\end{theorem}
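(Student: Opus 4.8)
The plan is to obtain \Cref{thm:weakly-chordal} directly from \Cref{cor:perfect} and \Cref{thm:wc-rec-NP-hard}, so that the argument is short and the genuine work is cited. The crucial observation is the one already indicated above: weakly chordal graphs are perfect, hence by \Cref{cor:perfect} a weakly chordal graph is well-covered if and only if it is localizable. In other words, on the class of weakly chordal graphs the decision problems \emph{``is $G$ well-covered?''} and \emph{``is $G$ localizable?''} have exactly the same set of yes-instances, and the latter is precisely {\sc Strong Clique Partition Existence} by definition.

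First I would check membership in co-\NP. By \Cref{prop:alpha-chi-bar}(d), $G$ is localizable if and only if it is well-covered and semi-perfect; since a weakly chordal graph is perfect, it is in particular semi-perfect, so a weakly chordal graph $G$ fails to be localizable exactly when it is not well-covered, i.e.\ when it admits two maximal independent sets of different sizes. Such a pair is a polynomially verifiable certificate (independence, maximality, and the size comparison are all checkable in polynomial time), which places the problem in co-\NP when restricted to weakly chordal graphs. (Membership in co-\NP in fact holds for all graphs, cf.\ \Cref{table-results}.)

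For co-\NP-hardness I would use the trivial identity reduction from the recognition of well-covered graphs, which by \Cref{thm:wc-rec-NP-hard} is co-\NP-complete already on weakly chordal graphs: map a weakly chordal instance $G$ of the well-coveredness problem to the same graph $G$, now viewed as an instance of {\sc Strong Clique Partition Existence}. By the equivalence above, $G$ is well-covered if and only if it is localizable, so yes-instances map to yes-instances and no-instances to no-instances; moreover the map obviously preserves weak chordality and runs in polynomial time. Hence {\sc Strong Clique Partition Existence} is co-\NP-hard on weakly chordal graphs, and combined with the membership in co-\NP established above it is co-\NP-complete on this class.

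The argument has essentially no technical obstacle: the real difficulty is entirely contained in \Cref{thm:wc-rec-NP-hard}, whose proof in \cite{MR1217991,MR1161178} supplies the substantive gadget construction. The only points requiring (minor) care are that \Cref{cor:perfect} is indeed applicable here, which uses the classical fact that weakly chordal graphs are perfect as recalled above, and that the co-\NP membership step correctly invokes \Cref{prop:alpha-chi-bar}(d) to reduce non-localizability to non-well-coveredness on this class.
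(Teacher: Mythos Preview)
Your proposal is correct and follows exactly the same approach as the paper: the paper's entire argument is the sentence preceding the theorem, namely that weakly chordal graphs are perfect, so by \Cref{cor:perfect} well-coveredness and localizability coincide on this class, and hence \Cref{thm:wc-rec-NP-hard} transfers. Your write-up is simply a more detailed version of this, with the co-\NP\ membership spelled out explicitly via \Cref{prop:alpha-chi-bar}(d); one small caution is that your parenthetical remark that membership in co-\NP\ holds for \emph{all} graphs is not actually established in the paper (the text following \Cref{thm:strongcliques} flags this as unclear in general), so you should drop that aside or confine the membership claim to the weakly chordal case, where your certificate argument is valid.
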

\end{sloppypar}

Recall that, as noted in \Cref{table-results}, it follows from results of Prisner et al.~\cite{MR1368737} that localizable graphs can be recognized in polynomial time within the class of chordal graphs (and, more generally, also in the class of $C_4$-free graphs~\cite{HMR2018}).

Both proofs of Theorem~\ref{thm:wc-rec-NP-hard} from~\cite{MR1217991,MR1161178} are based on a reduction from the $3$-SAT problem. The input to the $3$-SAT problem is a set $X = \{x_1, \ldots, x_n\}$ of Boolean variables and a collection $\mathcal{C} = \{C_1,\ldots, C_m\}$ of clauses of size $3$ over $X$. Each clause is a disjunction of exactly three \emph{literals}, where a literal is either one of the variables (say $x_i$) or its negation (denoted by $\overline{x_i}$). The task is to determine whether the formula $\varphi = \bigwedge_{i = 1}^mC_i$ is satisfiable, that is, whether there exists a truth assignment to the $n$ variables which makes all the clauses simultaneously evaluate to true. The reduction produces from a given $3$-SAT instance $I$ a weakly chordal graph $G(I)$ such that $G(I)$ is not well-covered if and only if the formula is satisfiable.
More precisely, given an input $(X,\mathcal{C})$ to the $3$-SAT problem, representing a formula $\varphi$, a graph $G = G(X,\mathcal{C})$ is constructed with vertex set $V(G) = C\cup L$, where $C = \{c_1,\ldots, c_m\}$ forms a clique, $L = \{x_1,\overline{x_1}, \ldots, x_n,\overline{x_n}\}$, for each $i\in \{1,\ldots, n\}$, vertices $x_i$ and $\overline{x_i}$ are adjacent, for each $c_i\in C$ and each literal $\ell\in L$, $c_i\ell\in E(G)$ if and only if clause $C_i$ contains literal $\ell$, and there are no other edges. Then, $\varphi$ is satisfiable if and only if $G$ is not well-covered~\cite{MR1217991,MR1161178}.

\begin{sloppypar}
A similar construction, using the complement of $G$ instead of $G$, was used in a reduction from the satisfiability problem by Zang~\cite{MR1344757} to show that the problem of testing whether a given independent set in a graph is strong is co-\NP-complete. Since the class of weakly chordal graphs is closed under taking complements, Zang's proof shows the following.
\end{sloppypar}

\begin{theorem}
{\sc Strong Clique} is co-\NP-complete in the class of weakly chordal graphs.
\end{theorem}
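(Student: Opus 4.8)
The plan is to combine a straightforward membership argument with a reduction essentially identical to the Chv\'atal--Slater construction recalled above, reinterpreted via complementation so as to speak of cliques rather than independent sets. Membership in co-\NP{} is immediate from \Cref{lem:strong-cliques}: a clique $C$ in $G$ fails to be strong exactly when there is an independent set $I\subseteq V(G)\setminus C$ with $C\subseteq N_G(I)$, and such a set has at most $|V(G)|$ vertices, with both ``$I$ independent'' and ``$C\subseteq N_G(I)$'' checkable in polynomial time; thus $I$ is a polynomial-size certificate for a no-instance, so {\sc Strong Clique} lies in co-\NP{} for all graphs, in particular for weakly chordal ones.

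For co-\NP-hardness I would reduce $3$-SAT to the complementary question ``given $G$ and a clique $C$, is $C$ \emph{not} strong?'', taking as output the graph $G=G(X,\mathcal{C})$ defined above together with its clique $C$; this map is clearly polynomial-time computable. The crux is the equivalence
\[
C \text{ is not strong in } G \iff \varphi \text{ is satisfiable}.
\]
Since $V(G)\setminus C=L$ and the only edges inside $L$ are the pairs $x_i\overline{x_i}$, the independent sets of $G[L]$ are precisely the subsets of $L$ avoiding every complementary pair, i.e.\ the partial truth assignments over $X$. By \Cref{lem:strong-cliques}, $C$ is not strong iff some such $I$ dominates $C$, i.e.\ every $c_j\in C$ has a neighbour in $I$; by construction this means every clause $C_j$ contains a literal of $I$, i.e.\ $I$ extends to a satisfying assignment of $\varphi$. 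This yields the equivalence, and hence the reduction, \emph{provided} the constructed graph lies in the target class.

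That proviso is the only step requiring care, and it is precisely where Zang's complemented version of the construction is convenient. One route is to invoke the verification of Chv\'atal and Slater~\cite{MR1217991} and of Sankaranarayana and Stewart~\cite{MR1161178} that $G$ is weakly chordal: since $C$ is a clique and $G[L]$ is an induced matching, every induced cycle of $G$ meets $C$ in at most two vertices and so has length at most $4$, with a similar short analysis for $\overline{G}$. Alternatively---and this matches the remark preceding the theorem---one may phrase everything on $\overline{G}$: a set $C$ is a strong independent set of $\overline{G}$ if and only if it is a strong clique of $G$, and $\overline{G}$ is weakly chordal if and only if $G$ is (the class being closed under complementation), so Zang's proof that recognizing strong independent sets is co-\NP-complete on weakly chordal graphs~\cite{MR1344757} transfers verbatim. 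I expect membership and the equivalence to be routine; the point I would double-check is the weak chordality of the construction, which is, however, already established in the cited papers.
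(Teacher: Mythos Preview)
Your proposal is correct and follows essentially the same approach as the paper: the paper's argument is precisely that Zang's reduction from satisfiability, phrased on the complement of the Chv\'atal--Slater graph, yields co-\NP-hardness of {\sc Strong Clique}, and closure of weakly chordal graphs under complementation transfers it to the stated class. You spell out the membership argument and the equivalence ``$C$ not strong $\iff\varphi$ satisfiable'' in slightly more detail (the paper later records this equivalence explicitly in the proof of \Cref{thm:strongcliques}), but the underlying reduction and the handling of weak chordality are the same.
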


We now show that suitable modifications of the above reduction can be used to establish \NP-hardness of three of the six strong clique problems listed in~\Cref{sec:background}, even in the class of weakly chordal graphs.

\begin{sloppypar}
\begin{theorem}
\label{thm:strongcliques}
{\sc Strong Clique Existence} and {\sc Strong Clique Vertex Cover} are \NP-hard, even in the class of weakly chordal graphs. {\sc Strong Clique Partition} is co-\NP-complete, both in general and for weakly chordal graphs.
\end{theorem}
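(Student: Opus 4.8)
The plan is to derive all three statements from the graph $G=G(X,\mathcal{C})$ of the construction recalled above, or small modifications of it, combined with \Cref{lem:strong-cliques}.

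\emph{Strong Clique Partition.} Membership in co-\NP{} is clear: by \Cref{lem:strong-cliques}, a no-instance $(G,\mathcal{P})$ comes with a clique $C'\in\mathcal{P}$ and an independent set $I\subseteq V(G)\setminus C'$ with $C'\subseteq N_G(I)$, and the pair $(C',I)$ is a polynomially checkable certificate. For co-\NP-hardness, in general and for weakly chordal graphs, I would reduce from the problem of deciding whether a given $3$-CNF formula $\varphi=\bigwedge_j C_j$ is unsatisfiable, which is co-\NP-complete; after a harmless preprocessing step we may assume that no clause of $\varphi$ contains a variable together with its negation. Output $G=G(X,\mathcal{C})$, which is weakly chordal, together with the clique partition $\mathcal{P}=\{C\}\cup\{\{x_i,\overline{x_i}\}:1\le i\le n\}$ of $V(G)=C\cup L$. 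Using \Cref{lem:strong-cliques} one checks: (i) each $\{x_i,\overline{x_i}\}$ is strong, because an independent set dominating it would have to contain a neighbour of $x_i$ and a neighbour of $\overline{x_i}$ from $V(G)\setminus\{x_i,\overline{x_i}\}$, that is, clause vertices $c_j$ with $x_i\in C_j$ and $c_k$ with $\overline{x_i}\in C_k$; but distinct clause vertices are adjacent, so $c_j=c_k$, forcing a clause containing both $x_i$ and $\overline{x_i}$, which is impossible after preprocessing; and (ii) $C$ is strong if and only if it is not dominated by an independent subset of $L$, that is, if and only if no truth assignment satisfies every clause, that is, if and only if $\varphi$ is unsatisfiable. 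Hence every clique of $\mathcal{P}$ is strong precisely when $\varphi$ is unsatisfiable, which is the desired reduction; together with the co-\NP{} upper bound this yields co-\NP-completeness, even for weakly chordal graphs.

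\emph{Strong Clique Existence and Strong Clique Vertex Cover.} The graph $G(X,\mathcal{C})$ cannot be used unchanged: each edge $\{x_i,\overline{x_i}\}$ meets every maximal independent set of $G(X,\mathcal{C})$ (every such set contains exactly one of $x_i,\overline{x_i}$), so $G(X,\mathcal{C})$ always has a strong clique, and every literal vertex is always in one. The plan is a reduction from $3$-SAT to a weakly chordal graph $G(I)$ in which (a) these unavoidable strong cliques have been destroyed, and (b) \emph{satisfiability}, rather than unsatisfiability, is the property being tested. For (a) I would attach to each variable a small, triangle-free, weakly chordal gadget that carries no strong clique (in the spirit of an induced $C_6$, all of whose edges are non-strong), wired to $x_i$ and $\overline{x_i}$ so that $\{x_i,\overline{x_i}\}$ becomes dominated by an independent set and no new strong clique arises. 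For (b) I would attach a clause layer designed so that a distinguished maximal clique $Q$ is strong if and only if $\varphi$ is satisfiable; since the clause clique of $G(X,\mathcal{C})$ is strong if and only if $\varphi$ is \emph{un}satisfiable, this ``direction flip'' forces $Q$ to encode a truth assignment rather than the set of clauses, the most natural route being to let $Q$ choose one of the two literals of each variable, with clause gadgets arranged so that the independent sets disjoint from $Q$ dominate $Q$ exactly when the assignment chosen by $Q$ falsifies some clause. One then verifies that $G(I)$ is weakly chordal --- no induced cycle of length at least $5$ in $G(I)$ or in $\overline{G(I)}$ --- and that $Q$ is the only maximal clique of $G(I)$ that can be strong, so that $G(I)$ has a strong clique if and only if $\varphi$ is satisfiable; this gives \NP-hardness of {\sc Strong Clique Existence} in weakly chordal graphs. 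For {\sc Strong Clique Vertex Cover} I would then add a pendant vertex to every vertex of $G(I)$ that does not lie on $Q$: each such vertex then belongs to the strong clique consisting of itself and its pendant, unconditionally, whereas a pendant dominates only its host and so does not affect whether $Q$ or any other maximal clique is strong; hence every vertex of the enlarged graph lies in a strong clique if and only if $Q$ is strong, i.e.\ if and only if $\varphi$ is satisfiable. Adding degree-$1$ vertices preserves weak chordality, since they create no induced cycle of length at least $5$ in the graph or in its complement.

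The step I expect to be the main obstacle is (b): engineering the direction flip --- a strong clique whose existence is equivalent to the satisfiability, rather than the unsatisfiability, of $\varphi$ --- while keeping the graph weakly chordal and ensuring that the variable gadgets of (a) create no unwanted strong cliques. The rest is routine but lengthy: using \Cref{lem:strong-cliques} to show that every maximal clique of $G(I)$ other than $Q$ is dominated by an independent set, and describing all maximal independent sets of $G(I)$, each of which $Q$ must meet.
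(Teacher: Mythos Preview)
Your treatment of {\sc Strong Clique Partition} is correct and is essentially the paper's argument.

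The gap lies in your step (b). You correctly observe that the literal pairs $\{x_i,\overline{x_i}\}$ must be destroyed for {\sc Strong Clique Existence}, but you then insist on a ``direction flip'' so that a strong clique exists precisely when $\varphi$ is \emph{satisfiable}; this is exactly the step you flag as the main obstacle and do not carry out. The flip is unnecessary: a reduction in which the answer is ``yes'' if and only if $\varphi$ is \emph{un}satisfiable is a reduction from a co-\NP-complete problem and already yields the hardness asserted in the theorem. The paper keeps this direction throughout. For {\sc Strong Clique Existence} it attaches to each variable an explicit four-vertex gadget (new vertices $u_i,v_i,\overline{u_i},\overline{v_i}$, triangles on $\{x_i,u_i,v_i\}$ and $\{\overline{x_i},\overline{u_i},\overline{v_i}\}$, plus the three edges $u_i\overline{u_i},\,u_i\overline{v_i},\,v_i\overline{u_i}$). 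In the resulting weakly chordal graph $G'$ one checks with \Cref{lem:strong-cliques} that every maximal clique other than $C$ is dominated by a two-element independent set, while $C$ is unaffected since the new vertices have no neighbours in $C$; hence $G'$ has a strong clique if and only if $C$ is strong, if and only if $\varphi$ is unsatisfiable.

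For {\sc Strong Clique Vertex Cover} the paper's route is much shorter than your gadget-then-pendant plan: it uses the \emph{unmodified} graph $G(X,\mathcal{C})$, under the additional harmless assumption that no variable $x_i$ has the property that every clause contains $x_i$ or $\overline{x_i}$ (instances violating this reduce to $2$-SAT). Under this assumption, any maximal clique $K$ meeting both $C$ and $L$ satisfies $K\cap L=\{\ell\}$ for a single literal $\ell$, and choosing a clause vertex $c_j$ with $\ell,\overline{\ell}\notin C_j$ gives an independent set $\{\overline{\ell},c_j\}$ dominating $K$; so no such $K$ is strong. Hence every literal vertex already lies in the strong clique $\{x_i,\overline{x_i}\}$, while a clause vertex lies in a strong clique if and only if $C$ itself is strong, i.e., if and only if $\varphi$ is unsatisfiable. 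No new construction is needed.
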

\end{sloppypar}

\begin{proof}
We use a slight modification of the above reduction.
First, note that the $3$-SAT problem remains \NP-complete on instances satisfying the following assumptions:
\begin{enumerate}[(i)]
  \item {\it No clause contains a pair of the form $\{x_i,\overline{x_i}\}$.} Indeed, if there exists a clause containing both $x_i$ and $\overline{x_i}$, then such a clause is always satisfied and can be removed from the instance to obtain an equivalent one.

  \item {\it Each literal appears in at least one clause.}
  Indeed, if some literal $\ell$ does not appear in any clause, then we can set that literal to false and remove each clause containing its negation to obtain an equivalent instance.

  \item {\it No variable $x_i$ is such that every clause contains either $x_i$ or $\overline{x_i}$.} Indeed, instances having a variable $x_i$ such that every clause contains either $x_i$ or $\overline{x_i}$ can be solved in polynomial time, by solving two instances of the $2$-SAT problem corresponding to setting $x_i$ to true (resp., to false) and obtained by eliminating all clauses containing $x_i$ (resp., $\overline{x_i}$) and deleting $\overline{x_i}$ (resp., $x_i$) from all clauses containing it. The $2$-SAT problem is defined similarly as the $3$-SAT problem, except that every clause is of size two. The problem is solvable in linear time~\cite{MR526451,MR0471974}.
\end{enumerate}
We may thus assume that we are given a $3$-SAT instance $I$ satisfying assumptions (i)--(iii). Let $X$, $\mathcal{C}$, $\varphi$, $G$, $C = \{c_1,\ldots, c_m\}$, and $L = \{x_1,\overline{x_1}, \ldots, x_n,\overline{x_n}\}$ be as above.

Assumption (i) implies that none of the cliques $\{x_i,\overline{x_i}\}$ in $G$ is contained in the neighborhood of a vertex in $C$. Since $N_G(\{x_i,\overline{x_i}\})$ is a clique, it follows that each clique $\{x_i,\overline{x_i}\}$ is strong in $G$.

Rephrasing the observation of Zang~\cite{MR1344757} in our setting, we claim that formula $\varphi$ is satisfiable if and only if clique $C$ is not strong. Indeed, on the one hand, literals in $L$ that are set to true in a satisfying assignment form an independent set disjoint from $C$ that dominates $C$, and~\Cref{lem:strong-cliques} applies. Conversely, if $C$ is not strong then any maximal independent set $I$ disjoint from $C$ necessarily consists of exactly one literal from each pair $\{x_i,\overline{x_i}\}$; setting all literals in $I$ to true yields a satisfying assignment.

It follows that each of the cliques in the partition $\{C,\{x_1,\overline{x_1}\}, \ldots, \{x_1,\overline{x_n}\}\}$ of $V(G)$ is strong if and only if $C$ is strong, if and only if the formula is not satisfiable. Consequently, {\sc Strong Clique Partition} is \NP-hard in the class of weakly chordal graphs. Membership in co-\NP~(in any class of graphs) follows from the observation that a short certificate of a no instance consists of a pair $(K,I)$ where $K$ is one of the cliques in the partition and $I$ is a maximal independent set disjoint from $K$.

Next, we prove that {\sc Strong Clique Vertex Cover} is \NP-hard in the class of weakly chordal graphs. Observe that each vertex $\ell\in L$ is contained in a strong clique (namely the $K_2$ containing the literal $\ell$ and its negation). Moreover, we will now show that no clique of $G$ containing both a vertex from $L$ and a vertex from $C$ is strong. Let $K$ be such a clique, with, say, $\ell\in L\cap K$ and $c_i\in C\cap K$. Let us denote by $\overline{\ell}$ the literal complementary to $\ell$. It follows from the above that $\{\ell, \overline{\ell}\}$ is strong and in particular that $\overline{\ell}$ is not in $K$. Therefore $K\cap L = \{\ell\}$. By assumption (iii) on the $3$-SAT instance, there is a clause $c_j$ that contains neither $\ell$ not $\overline{\ell}$. Then $c_j\neq c_i$ and the set $I = \{\overline{\ell}, c_j\}$ is an independent set disjoint from $G$ that dominates $K$. By~\Cref{lem:strong-cliques}, $K$ is not strong.

We know that every vertex $\ell\in L$ is contained in a strong clique and, since no clique of $G$ containing both a vertex from $L$ and a vertex from $C$ is strong, a vertex in $C$ is contained in a strong clique if and only if $C$ is strong. It follows that every vertex of $G$ is contained in a strong clique if and only if $C$ is strong, which, as argued above, is equivalent to
non-satisfiability of $\varphi$. The claimed \NP-hardness follows.

It remains to show that {\sc Strong Clique Existence} is \NP-hard in the class of weakly chordal graphs. To this end, take the graph $G$ constructed as above and modify it to a graph $G'$ by adding, for each $i\in \{1,\ldots, n\}$, four new vertices $u_i,v_i,\overline{u_i},\overline{v_i}$ and edges $x_iu_i$,
$x_iv_i$, $u_iv_i$, $u_i\overline{u_i}$,
$u_i\overline{v_i}$, $v_i\overline{u_i}$,
$\overline{u_i}\,\overline{v_i}$,
$\overline{x_i}\,\overline{u_i}$, and
$\overline{x_i}\,\overline{v_i}$ (see~\Cref{fig:reduction}).

\begin{figure}[!ht]
  \centering
  \includegraphics[width=\linewidth]{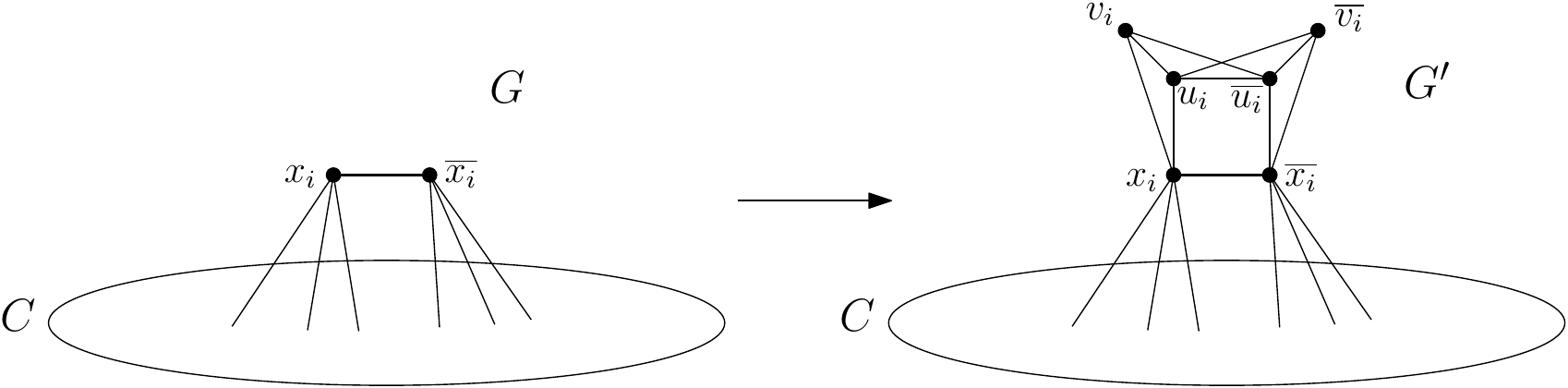}
  \captionof{figure}{Transforming $G$ into $G'$.}
  \label{fig:reduction}
\end{figure}

It is not difficult to see that the resulting graph $G'$ is weakly chordal.
Since none of the newly added vertices is adjacent to any vertex in $C$, the fact that $C$ is not strong if and only if formula $\varphi$ is satisfiable remains valid for $G'$.
Moreover, using~\Cref{lem:strong-cliques} it can be verified that every maximal clique of $G'$ other than $C$ is not strong:
\begin{itemize}
  \item The fact that no clique of $G$ containing both a vertex from $L$ and a vertex from $C$ is strong (in $G$) implies the same property for $G'$.
  \item For each $i\in \{1,\ldots, n\}$, the clique $\{x_i,\overline{x_i}\}$ is not strong since it is dominated by the independent set $\{v_i,\overline{v_i}\}$.
  \item For each $i\in \{1,\ldots, n\}$, the clique $\{x_i,u_i,v_i\}$ is not strong since it is dominated by the independent set $\{c_j,\overline{u_i}\}$, where $C_j$ is a clause containing $x_i$ (note that such a clause exists by assumption (ii)). A symmetric argument shows that  the clique $\{\overline{x_i},\overline{u_i},\overline{v_i}\}$ is not strong.
  \item For each $i\in \{1,\ldots, n\}$, the clique $\{u_i,\overline{u_i},\overline{v_i}\}$ is not strong since it is dominated by the independent set $\{x_i, v_i\}$. A symmetric argument shows that the clique $\{u_i,\overline{u_i}, v_i\}$ is not strong.
\end{itemize}
It follows that $G'$ contains a strong clique if and only if
$C$ is strong, which is equivalent to non-satisfiability of $\varphi$. This establishes the claimed \NP-hardness of {\sc Strong Clique Existence} in the class of weakly chordal graphs.
\end{proof}
\begin{sloppypar}
\Cref{thm:strongcliques} implies that the obvious certificate for yes instances of the problem of recognizing localizable graphs is most likely not verifiable in polynomial time, leaving the membership status of
{\sc Strong Clique Partition Existence} (a.k.a.~{\sc Localizability})
in NP (or in co-NP) unclear.
\end{sloppypar}

%Moreover, since a graph $G$ is weakly chordal if and only if its complement $\overline{G}$ is weakly chordal, \Cref{thm:strongcliques} has the following consequence.
%
%\begin{corollary}\label{cor:weakly-chordal}
%The following problems are \NP-hard, even for weakly chordal graphs:
%\begin{enumerate}
%  \item The problem of determining whether, given a graph and a partition of its vertex set into independent sets, each independent set in the partition is strong. (This problem is also co-\NP-complete.)
%  \item The problem of determining whether every vertex of a given graph is contained in a strong independent set.
%\end{enumerate}
%\end{corollary}
%
%{\color{blue}{\bf Question (M.):} Is the above corollary of sufficient interest to be kept? I am not completely sure.
%Note that one could have similar corollaries in \Cref{sec:co-line}.
%Perhaps it is best to remove it? It is straightforward anyway, whoever needs it should be able to observe it easily.}

%====================================================================================

\section{A useful auxiliary problem}\label{sec:a-problem}

We introduce one more problem related to strong cliques.

\begin{center}
\fbox{\parbox{0.85\linewidth}{\noindent
{\sc Strong Clique Extension}\\[.8ex]
\begin{tabular*}{.93\textwidth}{rl}
{\em Input:} & A graph $G$ and a clique $C$ in $G$.\\
{\em Question:} & Does $G$ contain a strong clique $C'$ such that
$C\subseteq C'$?
\end{tabular*}
}}
\end{center}

As the next lemma shows, {\sc Strong Clique Extension} in some sense generalizes the first five of the six strong clique problems listed in~\Cref{sec:background}.

\begin{sloppypar}
\begin{lemma}\label{lem:reduction}
Let $\mathcal{G}$ be a class of graphs such that
{\sc Strong Clique Extension} is solvable in polynomial time for graphs in
$\mathcal{G}$.
Then, each of the problems {\sc Strong Clique}, {\sc Strong Clique Existence}, {\sc Strong Clique Vertex Cover}, {\sc Strong Clique Edge Cover}, and {\sc Strong Clique Partition} is solvable in polynomial time for graphs in $\mathcal{G}$.
\end{lemma}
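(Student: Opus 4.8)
The plan is to solve each of the five problems on $\mathcal{G}$ by making polynomially many calls to the assumed polynomial-time algorithm for {\sc Strong Clique Extension}, always on the input graph $G$ itself (so that the subroutine is invoked only on graphs of $\mathcal{G}$), together with polynomial-time bookkeeping. Three of the five problems fall out immediately once one reads {\sc Strong Clique Extension} as the question ``is the given clique contained in a strong clique?''. Indeed, $G$ has a strong clique if and only if the empty clique extends to a strong clique, so {\sc Strong Clique Existence} on $G$ is the single instance $(G,\emptyset)$; a vertex $v$ lies in a strong clique if and only if $(G,\{v\})$ is a yes-instance, so {\sc Strong Clique Vertex Cover} is answered by one call per vertex; and an edge $uv$ lies in a strong clique if and only if $(G,\{u,v\})$ is a yes-instance, so {\sc Strong Clique Edge Cover} is answered by one call per edge.

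The two remaining problems, {\sc Strong Clique} and {\sc Strong Clique Partition}, both reduce to deciding, for a \emph{given} clique $C$, whether $C$ is strong, and here a small subtlety must be addressed: $(G,C)$ being a yes-instance of {\sc Strong Clique Extension} only says that \emph{some} clique containing $C$ is strong, which is strictly weaker than $C$ itself being strong (for instance, in $P_3$ the central vertex by itself is a non-strong clique that is contained in the strong clique given by either of the two edges). The observation that closes the gap is that \emph{a clique that is not maximal cannot be strong}: if $v\in V(G)\setminus C$ is adjacent to every vertex of $C$, then $\{v\}$ is an independent set contained in $V(G)\setminus C$ that dominates $C$, so $C$ is not strong by \Cref{lem:strong-cliques}. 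Consequently, $C$ is strong if and only if $C$ is a maximal clique \emph{and} $(G,C)$ is a yes-instance of {\sc Strong Clique Extension}: when $C$ is maximal it is the only clique containing it, so ``$C$ extends to a strong clique'' says precisely ``$C$ is strong'', and when $C$ is not maximal both conditions are false. Since maximality of a clique is checkable in polynomial time, this yields a polynomial-time algorithm for {\sc Strong Clique} on $\mathcal{G}$, and running it on each clique of the input partition settles {\sc Strong Clique Partition}.

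In total, each of the five problems uses at most $|V(G)|+|E(G)|+1$ calls to the {\sc Strong Clique Extension} algorithm, all on $G\in\mathcal{G}$, and all remaining computations are clearly polynomial, which proves the lemma. The step I would be most careful with --- and the one I see as the crux --- is the reduction for {\sc Strong Clique}: a priori ``is $C$ strong?'' reads as a universally quantified statement that one would not expect to reduce to the existential problem {\sc Strong Clique Extension} without first enlarging the graph by some gadget; it is exactly the remark that non-maximal cliques are never strong that makes the plain query $(G,C)$ suffice and keeps the whole reduction inside $\mathcal{G}$.
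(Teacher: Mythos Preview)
Your proof is correct and follows essentially the same approach as the paper's: reduce {\sc Strong Clique Existence}, {\sc Strong Clique Vertex Cover}, and {\sc Strong Clique Edge Cover} to the extension problem with the empty clique, single vertices, and single edges respectively; handle {\sc Strong Clique} via the observation that a clique is strong if and only if it is maximal and extends to a strong clique; and derive {\sc Strong Clique Partition} from {\sc Strong Clique}. Your treatment is in fact slightly more explicit than the paper's in justifying why non-maximal cliques are never strong (via \Cref{lem:strong-cliques}), which is exactly the point that makes the {\sc Strong Clique} reduction go through without leaving $\mathcal{G}$.
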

\end{sloppypar}

\begin{proof}
Let $G$ be a graph from $\mathcal{G}$ and let $C$ be a clique in $G$.
It is clear that $C$ is a strong clique in $G$ if and only if
$C$ is a maximal clique and $G$ contains a strong clique $C'$ such that
$C\subseteq C'$.
It follows that testing if $C$ is strong can be done in polynomial time by testing if $C$ is maximal and if $C$ extends to a strong clique. The {\sc Strong Clique Existence} problem is equivalent to determining if the empty clique extends to some strong clique. Similarly, the {\sc Strong Clique Vertex Cover} (resp., {\sc Strong Clique Edge Cover}) problem can be solved in polynomial time simply by testing if every clique of size one (resp., two) extends to a strong clique.
Finally, since {\sc Strong Clique} is polynomial-time solvable for graphs in $\mathcal{G}$, {\sc Strong Clique Partition} can also be solved in polynomial time for graphs in $\mathcal{G}$ by testing whether each of the clique in the given partition is strong.
\end{proof}

As an immediate application of \Cref{lem:reduction}, we observe that
together with a result from~\cite{HMR2018}, the lemma implies that all the above problems are polynomial-time solvable in the class of chordal graphs, and even more generally, in the class of $C_4$-free graphs.

\begin{proposition}
{\sc Strong Clique Extension} is solvable in polynomial time in the class of $C_4$-free graphs.
\end{proposition}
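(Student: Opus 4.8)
The plan is to sidestep the co-\NP-completeness of recognizing strong cliques in general graphs by establishing a clean \emph{local} characterization special to $C_4$-free graphs: in a $C_4$-free graph $G$, a clique $C'$ is strong if and only if there is a vertex $c\in C'$ with $N_G[c]=C'$ (so that $c$ is simplicial and $C'$ is automatically a maximal clique). Granting this, {\sc Strong Clique Extension} becomes algorithmically trivial. Given a $C_4$-free graph $G$ and a clique $C$, I would, for each vertex $c\in V(G)$, test whether $N_G[c]$ is a clique and whether $C\subseteq N_G[c]$; the answer is ``yes'' precisely when some $c$ passes both tests, since then $N_G[c]$ is a strong clique containing $C$, and conversely any strong clique $C'\supseteq C$ equals $N_G[c]$ for some $c\in C'$ and hence satisfies $C\subseteq N_G[c]$. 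Each test runs in polynomial time, so the whole procedure does. (One harmless subtlety: when the candidate $c$ already lies in $C$, the inclusion $C\subseteq N_G[c]$ is automatic because $C$ is a clique.)

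It remains to prove the characterization. One implication needs only \Cref{lem:strong-cliques} and does not use $C_4$-freeness: if $c\in C'$ with $N_G[c]=C'$, then every neighbor of $c$ lies in $C'$, so no independent set disjoint from $C'$ can dominate $c$; hence $C'$ is not dominated by any independent set in $V(G)\setminus C'$ and is therefore strong. For the converse I would prove the contrapositive: if $C'$ is a clique in which \emph{every} vertex has a neighbor outside $C'$, then $C'$ is dominated by some independent set $I\subseteq V(G)\setminus C'$, and hence is not strong. To build $I$, fix a function $f$ assigning to each $c\in C'$ some neighbor $f(c)\in N_G(c)\setminus C'$ (possible by hypothesis), chosen so that $|f(C')|$, the number of distinct values, is minimum, and then argue that $f(C')$ is an independent set; since it dominates $C'$ by construction, we are done.

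The crux---and the step I expect to be the main obstacle---is the structural fact underlying the minimality argument: \emph{in a $C_4$-free graph, if $w\neq w'$ are adjacent vertices outside a clique $C'$, then the traces $N_G(w)\cap C'$ and $N_G(w')\cap C'$ are comparable under inclusion.} I would prove this with a single forbidden $C_4$: suppose $a\in(N_G(w')\cap C')\setminus(N_G(w)\cap C')$ and let $x\in N_G(w)\cap C'$ be arbitrary; then $x\neq a$, so $x\sim a$ because both lie in the clique $C'$, and now $w'$ and $x$ are two distinct common neighbors of the non-adjacent pair $w,a$, so $w'\sim x$---otherwise $\{w,w',a,x\}$ would induce a $C_4$ (its only possible chords $wa$ and $w'x$ both being absent). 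Hence $N_G(w)\cap C'\subseteq N_G(w')\cap C'$. With this fact, if $f(C')$ were not independent, pick adjacent $w,w'\in f(C')$; by comparability we may assume $N_G(w)\cap C'\subseteq N_G(w')\cap C'$, and reassigning every $c$ with $f(c)=w$ to the value $w'$ produces an admissible function (each such $c$ lies in $N_G(w)\cap C'\subseteq N_G(w')\cap C'$, so $c\sim w'$) with strictly fewer distinct values, contradicting minimality. Everything after this structural fact is bookkeeping, so I expect the difficulty to be concentrated there.
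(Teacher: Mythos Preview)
Your proposal is correct and follows essentially the same approach as the paper: both reduce {\sc Strong Clique Extension} in $C_4$-free graphs to checking containment in a simplicial clique via the characterization ``strong clique $=$ closed neighborhood of a simplicial vertex.'' The only difference is that the paper cites this characterization from \cite{HMR2018} (Lemma~4.4 there), whereas you supply a self-contained proof via the neat comparability/minimality argument on neighbor traces---a nice bonus, and your argument is sound.
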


\begin{proof}
As shown in \cite[Lemma 4.4]{HMR2018}, a clique in a $C_4$-free graph is strong if and only if it is simplicial, that is, if it consists of a vertex and all its neighbors. Therefore, in the class of $C_4$-free graphs {\sc Strong Clique Extension} reduces to the problem of testing if a given clique is contained in a simplicial clique. Since the simplicial cliques of a graph can be listed in polynomial time, polynomial-time solvability of {\sc Strong Clique Extension} in the class of $C_4$-free graphs follows.
\end{proof}

\begin{corollary}\label{cor:C4-free}
Each of the problems {\sc Strong Clique}, {\sc Strong Clique Existence}, {\sc Strong Clique Vertex Cover}, {\sc Strong Clique Edge Cover}, and {\sc Strong Clique Partition} is solvable in polynomial time in the class of $C_4$-free graphs.
\end{corollary}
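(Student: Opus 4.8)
The plan is to invoke \Cref{lem:reduction} directly. That lemma states that if \textsc{Strong Clique Extension} is solvable in polynomial time for graphs in a class $\mathcal{G}$, then each of the five problems \textsc{Strong Clique}, \textsc{Strong Clique Existence}, \textsc{Strong Clique Vertex Cover}, \textsc{Strong Clique Edge Cover}, and \textsc{Strong Clique Partition} is also solvable in polynomial time for graphs in $\mathcal{G}$. So the corollary follows immediately once we know that \textsc{Strong Clique Extension} is polynomial-time solvable on $C_4$-free graphs — which is precisely the content of the preceding proposition. Hence the entire proof is a single sentence: apply \Cref{lem:reduction} to the class of $C_4$-free graphs, using the proposition to verify the hypothesis.

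The only thing worth spelling out is why the proposition holds, since that is where the actual work sits. The key structural fact is the lemma from Hujdurovi\'c et al.\ that in a $C_4$-free graph a clique is strong if and only if it is \emph{simplicial}, i.e.\ equals $N[v]$ for some vertex $v$ (in particular it is of the form ``a vertex together with all its neighbors''). Given this, \textsc{Strong Clique Extension} on a $C_4$-free graph $G$ with input clique $C$ amounts to: does there exist a vertex $v$ with $N_G[v]$ a clique and $C \subseteq N_G[v]$? One can check this by iterating over all $n$ vertices $v$, testing in polynomial time whether $N_G[v]$ induces a clique, and if so whether $C \subseteq N_G[v]$; accept iff some $v$ passes both tests. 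This runs in polynomial time, so the proposition follows, and then \Cref{lem:reduction} finishes the corollary.

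The main (and only) obstacle is conceptual rather than technical: one must be confident that the characterization of strong cliques in $C_4$-free graphs as exactly the simplicial cliques is correct, and that ``simplicial clique'' is interpreted as a \emph{maximal} clique of the form $N[v]$ (a strong clique is necessarily maximal, and in a $C_4$-free graph $N[v]$ being a clique already makes it a maximal clique). Once that is granted, enumeration is trivially polynomial and there are no calculations to grind through. The corollary itself requires nothing beyond quoting \Cref{lem:reduction}.

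\begin{proof}
The class of $C_4$-free graphs is closed under taking the relevant instances, and by the preceding proposition \textsc{Strong Clique Extension} is solvable in polynomial time for $C_4$-free graphs. Applying \Cref{lem:reduction} with $\mathcal{G}$ the class of $C_4$-free graphs yields the claim.
\end{proof}
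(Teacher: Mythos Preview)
Your proposal is correct and matches the paper's approach exactly: the corollary is stated without proof in the paper, as it follows immediately from the preceding proposition together with \Cref{lem:reduction}. Your extra paragraph unpacking the proposition's proof is accurate and mirrors the paper's own argument (the simplicial-clique characterization from~\cite{HMR2018}).
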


Further applications of~\Cref{lem:reduction} will be given in Sections~\ref{sec:co-line} and~\ref{sec:max3}.

\begin{sloppypar}
\section{Connections with matchings: line graphs and their complements}\label{sec:co-line}
\end{sloppypar}

\begin{sloppypar}
Relationships between strong cliques and matchings have already been used in a number of works studying properties of line graphs and their complements~\cite{MR3488933,MR3575013,HMR2018,MR3162288,MT}. We now further elaborate on these connections in order to obtain new algorithmic and complexity results on the six strong cliques problems in the classes of line graphs and their complements. Two cases have already been settled in the literature, both giving polynomial-time algorithms for the class of line graphs: one for {\sc Strong Clique Edge Cover} (a.k.a.~{\sc General Partitionability}), due to Levit and Milani\v{c}~\cite{MR3162288}, and one for {\sc Strong Clique Partition Existence} (a.k.a.~{\sc Localizability}), due to Hujdurovi\'c et al.~\cite{HMR2018}. A related result is also a result of Boros et al.~\cite{MR3575013} giving a polynomial-time algorithm to test whether a given line graph is CIS, that is, whether every maximal clique in a line graph is strong. Since the class of CIS graphs is closed under complementation, this result also yields a polynomial-time algorithm for the same problem in the class of complements of line graphs.
\end{sloppypar}

In this section, we show that {\sc Strong Clique Extension} is solvable in polynomial time in the class of line graphs and their complements. Together with the above-mentioned result from~\cite{HMR2018}, this implies that all six strong clique problems listed in~\Cref{sec:background} are polynomial-time solvable in the class of line graphs. For the class of complements of line graphs, however, we show that {\sc Strong Clique Partition Existence}
remains \NP-complete.

We first introduce some notation and translate the relevant concepts from the line graph or its complement to the root graph.

\begin{definition}\label{def:M_G}
Given a graph $G$, we denote by $\mathcal{T}_G$ the set $\{E(T) \mid T$ is a triangle in $G\}$, by $\mathcal{S}_G$ the set of all \emph{stars} of $G$, that is, sets of edges of the form $E(v) = \{e\in E(G)\mid v$ is an endpoint of $e\}$ for $v\in V(G)$, and by $\mathcal{M}_G$ the set of all inclusion-maximal sets in the family $\mathcal{T}_G\cup \mathcal{S}_G$.
\end{definition}

The following observation is an immediate consequence of the definitions.

\begin{observation}\label{obs}
For every graph $G$ without isolated vertices, the following holds:
\begin{enumerate}
\item $e$ is an edge of $G \Longleftrightarrow$ $e$ is a vertex of $L(G) \Longleftrightarrow$ $e$ is a vertex of $\overline{L(G)}$.
\item For every $F\subseteq E(G)$, we have:
\begin{itemize}
  {\setlength\itemindent{-0.5cm}\item $F$ is a clique in $L(G)$
$\Longleftrightarrow$
$F\subseteq E(v)$ for some $v\in V(G)$ or $F\in \mathcal{T}_G$.}
    {\setlength\itemindent{-0.5cm}\item $F$ is a maximal clique in $L(G)$
$\Longleftrightarrow$
$F$ is a maximal independent set in $\overline{L(G)}$
$\Longleftrightarrow$ $F\in \mathcal{M}_G$.}
  {\setlength\itemindent{-0.5cm}  \item $F$ is an independent set in $L(G) \Longleftrightarrow
  F$ is a clique in $\overline{L(G)} \Longleftrightarrow F$ is a matching in $G$.}
  {\setlength\itemindent{-0.5cm}    \item $F$ is a maximal independent set in $L(G)$
$\Longleftrightarrow$
$F$ is a maximal clique in $\overline{L(G)}$
$\Longleftrightarrow$
$F$ is a maximal matching in $G$.}
  {\setlength\itemindent{-0.5cm}    \item $F$ is a strong clique in $L(G)$
$\Longleftrightarrow$
$F$ is a set in $\mathcal{M}_G$ that intersects every maximal matching in $G$.}
  {\setlength\itemindent{-0.5cm}    \item $F$ is a strong clique in $\overline{L(G)}$
$\Longleftrightarrow$ $F$ is a maximal matching in $G$ that intersects every set in $\mathcal{M}_G$.}
\end{itemize}
\end{enumerate}
\end{observation}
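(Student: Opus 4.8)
The plan is to prove \Cref{obs} by systematically unwinding three things --- the definition of the line graph, the effect of complementation, and the definition of a strong clique --- and invoking exactly two outside facts: the classical description of cliques in a line graph, and \Cref{lem:strong-cliques}. Throughout I would keep the hypothesis that $G$ has no isolated vertices firmly in mind, since it is precisely what guarantees that every star $E(v)$ is nonempty and that every edge of $G$ lies in at least one star, so that $\mathcal{M}_G$ really does cover $E(G)$. Item~1 is then immediate, as $L(G)$ and $\overline{L(G)}$ are by definition two graphs on the common vertex set $E(G)$, and all the equivalences in item~2 are to be read with this identification.

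For item~2, the first bullet is the standard structure of cliques in a line graph: a family $F$ of pairwise intersecting edges of $G$ either has a common endpoint, in which case $F\subseteq E(v)$ for some $v\in V(G)$, or is the edge-set of a triangle, i.e.\ $F\in\mathcal{T}_G$. I would either cite this or give the short Helly-type argument: if two members $uv,vw$ of $F$ meet only at $v$ and some $e\in F$ avoids $v$, then $e$ must meet both $uv$ and $vw$, forcing $e=uw$, and then any further member of $F$ meeting all of $uv,vw,uw$ is one of these three edges. Conversely, every subset of a star and every $\mathcal{T}_G$-set is clearly a clique of $L(G)$. The second bullet follows at once: a maximal clique of $L(G)$ is itself a star or a triangle, hence is an inclusion-maximal member of $\mathcal{T}_G\cup\mathcal{S}_G$, and conversely every such inclusion-maximal set is a maximal clique of $L(G)$; that is, the maximal cliques of $L(G)$ are exactly the sets in $\mathcal{M}_G$. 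Since passing to the complement interchanges cliques and independent sets while preserving inclusion (hence maximality), these are also exactly the maximal independent sets of $\overline{L(G)}$.

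The third and fourth bullets are equally mechanical: two edges are adjacent in $L(G)$ if and only if they intersect in $G$, so an independent set of $L(G)$ is precisely a set of pairwise disjoint edges of $G$, i.e.\ a matching; complementation gives the statement for $\overline{L(G)}$, and the maximal versions follow because each of these correspondences respects inclusion. For the last two bullets I would first isolate the general remark that in any graph a strong clique is a maximal clique: if $C$ is a clique and $v\notin C$ with $C\cup\{v\}$ a clique, then the independent set $\{v\}\subseteq V(G)\setminus C$ dominates $C$, so $C$ is not strong by \Cref{lem:strong-cliques}. Hence a strong clique of $L(G)$ is a maximal clique of $L(G)$ --- thus a member of $\mathcal{M}_G$ by the second bullet --- which additionally meets every maximal independent set of $L(G)$, i.e.\ every maximal matching of $G$ by the fourth bullet; this is the fifth bullet. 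Symmetrically, a strong clique of $\overline{L(G)}$ is a maximal clique of $\overline{L(G)}$, i.e.\ a maximal matching of $G$, meeting every maximal independent set of $\overline{L(G)}$, i.e.\ every member of $\mathcal{M}_G$; this is the sixth bullet.

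There is no genuine obstacle here --- the statement is essentially a dictionary --- and the only ingredient beyond bookkeeping is the classical characterization of cliques in line graphs, which can be cited or proved in two lines as above. The one place to be careful is exactly the role of the ``no isolated vertices'' hypothesis: it is what ensures $\mathcal{S}_G$ consists of nonempty sets and hence that $\mathcal{M}_G$ is literally the set of all maximal cliques of $L(G)$, and likewise that notions such as ``maximal matching'' and ``maximal independent set of $L(G)$'' match up without the degenerate discrepancies that isolated vertices would introduce.
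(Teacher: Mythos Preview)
Your proposal is correct and matches the paper's stance: the paper offers no proof at all, merely stating that the observation is ``an immediate consequence of the definitions.'' Your write-up simply supplies the routine details (the classical clique structure in line graphs, the complementation dictionary, and the maximality of strong cliques) that the paper leaves implicit, so there is nothing to correct or contrast.
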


\begin{theorem}\label{thm:SCEXT-line}
{\sc Strong Clique Extension} is solvable in polynomial time in the class of line graphs.
\end{theorem}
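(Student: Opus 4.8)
The plan is to reduce the problem to polynomially many maximum-weight matching computations in the root graph. By \Cref{thm:line} we may compute in linear time a graph $H$ with $G\cong L(H)$; since isolated vertices of $H$ do not affect $L(H)$, we may assume $H$ has none, so that \Cref{obs} applies. Fixing such an isomorphism, the input clique $C$ of $G$ corresponds to a set $F\subseteq E(H)$. First I would observe that every strong clique is a maximal clique: if a clique $C'$ is not maximal, pick $u\notin C'$ with $C'\cup\{u\}$ a clique and extend $\{u\}$ to a maximal independent set $I$; every vertex of $C'$ is adjacent to $u$, hence lies outside $I$, so $C'\cap I=\emptyset$. Consequently $G$ has a strong clique containing $C$ if and only if some $F'\in\mathcal{M}_H$ with $F\subseteq F'$ is a strong clique of $L(H)$, equivalently (by \Cref{obs}) some such $F'$ meets every maximal matching of $H$.

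Since $\mathcal{M}_H$ consists of the triangles of $H$ together with certain stars, it has at most $\binom{|V(H)|}{3}+|V(H)|$ members and can be listed in polynomial time; discarding those not containing $F$ leaves a polynomially long list $\mathcal{C}$ of candidates. It therefore remains to test, for a fixed $F'\in\mathcal{C}$, whether $F'$ is a strong clique of $L(H)$, and to return ``yes'' iff some candidate passes. For this test I would use \Cref{lem:strong-cliques}: $F'$ fails to be strong exactly when it is dominated by an independent set $I$ of $L(H)$ disjoint from $F'$, and by \Cref{obs} such an $I$ is a matching of $H$; unwinding the domination condition yields a purely matching-theoretic statement in $H$. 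Concretely, if $F'=E(v)$ for a vertex $v$, then $I$ must avoid every edge at $v$ and cover every neighbour of $v$, so $F'$ is \emph{not} strong iff $H-v$ has a matching covering every vertex of $N_H(v)$. If $F'=E(T)$ for a triangle $T$ on vertices $a,b,c$, then $I$ must use none of the edges $ab,bc,ca$ and must cover at least two of $a,b,c$ (the three domination requirements ``$I$ covers $a$ or $b$'', ``$I$ covers $b$ or $c$'', ``$I$ covers $a$ or $c$'' together amount to exactly this), so $F'$ is \emph{not} strong iff for some two-element subset $\{p,q\}\subseteq\{a,b,c\}$ the graph obtained from $H$ by deleting $ab,bc,ca$ has a matching saturating $\{p,q\}$.

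Each such subquery has the form ``does a given graph $H'$ have a matching saturating a prescribed vertex set $S$'', which reduces to a single maximum-weight matching computation: weighting each edge $e$ by $|e\cap S|$, a maximum-weight matching of $H'$ has weight $|S|$ iff a matching saturating $S$ exists. Hence every candidate can be tested in polynomial time \cite{MR859549}, and since there are only polynomially many candidates the whole algorithm runs in polynomial time; together with \Cref{obs} this proves the theorem.

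I expect the main obstacle to be conceptual rather than computational: correctly translating ``$F'$ intersects every maximal matching of $H$'' through \Cref{lem:strong-cliques} into the clean matching-saturation statements above, and verifying that a star $E(v)$ and a triangle $E(T)$ are indeed the only two possibilities for $F'\in\mathcal{M}_H$. Once \Cref{obs} is in hand this is routine, and an entirely analogous argument, with the roles of cliques and matchings exchanged as in \Cref{obs}, should handle {\sc Strong Clique Extension} for complements of line graphs as well.
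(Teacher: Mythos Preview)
Your proposal is correct and follows essentially the same approach as the paper: compute the root graph $H$, enumerate the polynomially many candidates in $\mathcal{M}_H$ containing $F$, and for each candidate test strength via a matching condition in $H$ (saturating $N_H(v)$ in $H-v$ for a star, saturating two triangle vertices in $H$ minus the triangle edges for a triangle), reducing each test to maximum-weight matching. The only cosmetic difference is that the paper handles the triangle case by a direct local check on pairs of edges rather than invoking weighted matching, but the argument is otherwise identical.
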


\begin{proof}
Let $G$ be a line graph and let $C$ be a clique in $G$. Using~\Cref{thm:line}, we can compute in linear time a graph $H$ such that $G \cong L(H)$. We also compute, in polynomial time, the set $\mathcal{M}_H$. Since every strong clique in a graph is a maximal clique and the maximal cliques of $G$ correspond exactly to the elements of $\mathcal{M}_H$, it suffices to show that there is a polynomial-time algorithm to check if a given $C'\in \mathcal{M}_H$ corresponds to a strong clique $C_G'$ in $G$. Then, to check if $C$ is contained in a strong clique it will suffice to check if any $C'\in \mathcal{M}_H$ such that $C\subseteq C_G'$ corresponds to a strong clique.

Let $C'\in \mathcal{M}_H$. Thus, $C'$ is a set of edges in $H$ and we need to check if $C'$ intersects every maximal matching in $H$.
Suppose first that $C'\in \mathcal{T}_H$. Then $C'$ is the edge set of a triangle $T$ in $H$. It is not difficult to see that $H$ contains a maximal matching disjoint from $C'$ if and only if $H-C'$ contains a matching
saturating at least two vertices of $T$. This is a local property that can be easily checked in polynomial time, for example by examining all pairs of disjoint edges in $H-C'$ connecting $V(T)$ with the rest of the graph.
Suppose now that $C'\in \mathcal{S}_H$.
Then there exists a vertex $v\in V(H)$ such that $C' = E(v)$, and $E(v)$ intersects every maximal matching in $H$ if and only if every maximal matching in $H$ saturates $v$. This fails if and only if $H-v$ contains a matching saturating all vertices in $N_H(v)$. There is a standard reduction of this problem to an instance of the maximum-weight matching problem in $H-v$: to each edge $e$ of $H-v$ assign the weight $w(e) = |e\cap N_H(v)|$ and verify if $H-v$ has a matching of weight at least $|N_H(v)|$. Since the maximum-weight matching problem is polynomial-time solvable (e.g., by Edmonds~\cite{MR0183532}), the theorem follows.
\end{proof}

\Cref{lem:reduction} and~\Cref{thm:SCEXT-line} imply the following.

\begin{sloppypar}
\begin{corollary}\label{cor:line}
{\sc Strong Clique}, {\sc Strong Clique Existence}, {\sc Strong Clique Vertex Cover}, {\sc Strong Clique Edge Cover}, and {\sc Strong Clique Partition} problems are
solvable in polynomial time in the class of line graphs.
\end{corollary}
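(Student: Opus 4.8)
The plan is almost immediate: \Cref{cor:line} is a direct instantiation of \Cref{lem:reduction} with the class $\mathcal{G}$ taken to be the class of line graphs. All the substantive work has already been carried out in \Cref{thm:SCEXT-line}, which establishes that {\sc Strong Clique Extension} is solvable in polynomial time on line graphs; this is precisely the hypothesis required to invoke \Cref{lem:reduction}. So the proof would consist of one sentence citing these two results, and everything below is elaboration.

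First I would note why no closure property of the class of line graphs under graph operations is needed. The reductions described in the proof of \Cref{lem:reduction} never modify the input graph $G$: testing whether a clique $C$ is strong amounts to checking maximality of $C$ together with a single call to {\sc Strong Clique Extension} on $(G,C)$; {\sc Strong Clique Existence} is the single call on the pair $(G,\emptyset)$; {\sc Strong Clique Vertex Cover} and {\sc Strong Clique Edge Cover} are, respectively, $|V(G)|$ and $|E(G)|$ calls on pairs $(G,\{v\})$ and $(G,\{u,v\})$; and {\sc Strong Clique Partition} is one call to {\sc Strong Clique} --- hence a bounded number of calls to {\sc Strong Clique Extension} --- for each clique in the given partition. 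Since $G$ itself stays fixed throughout, it remains a line graph, so the polynomial-time algorithm of \Cref{thm:SCEXT-line} applies to each of these subproblems.

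Finally I would simply chain the running-time bounds: each of the five problems is solved by polynomially many invocations of the {\sc Strong Clique Extension} algorithm together with polynomial-time bookkeeping (enumerating vertices, edges, or the cliques of the given partition, and verifying maximality of a clique), yielding an overall polynomial running time. I do not anticipate any genuine obstacle here; the only point worth keeping in mind is that we use only the decision version of {\sc Strong Clique Extension} on line graphs, which is exactly what \Cref{thm:SCEXT-line} provides and what \Cref{lem:reduction} consumes, so no certificate extraction or additional structural argument about line graphs is required.
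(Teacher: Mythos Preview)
Your proposal is correct and matches the paper's own approach: the corollary is stated as an immediate consequence of \Cref{lem:reduction} applied with $\mathcal{G}$ the class of line graphs, using \Cref{thm:SCEXT-line} to verify the hypothesis. Your additional remarks about the reductions not modifying $G$ and the polynomial bookkeeping are sound elaborations, but the paper simply cites the two results without further comment.
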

\end{sloppypar}

We now turn to complements of line graphs.

\begin{sloppypar}
\begin{theorem}\label{thm:SCEXT-co-line}
{\sc Strong Clique Extension} is solvable in polynomial time in the class of complements of line graphs.
\end{theorem}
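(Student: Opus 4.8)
The plan is to follow the strategy of \Cref{thm:SCEXT-line}: move from the input graph to a root graph, restate the question purely in terms of matchings, and then solve the resulting matching problem in polynomial time — here via a preprocessing phase followed by a reduction to a weighted matching computation. Given an input graph $G$ that is a complement of a line graph, I would first apply \Cref{thm:line} to $\overline{G}$ to obtain in linear time a graph $H$ with $G\cong\overline{L(H)}$ (we may assume $H$ has no isolated vertices). By \Cref{obs}, the given clique $C$ of $G$ corresponds to a matching $M_0$ of $H$, and the strong cliques of $G$ that contain $C$ correspond exactly to the maximal matchings $M$ of $H$ with $M_0\subseteq M$ that meet every set in $\mathcal{M}_H$. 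Since $\mathcal{M}_H$ consists of the triangle edge-sets together with the inclusion-maximal stars, and a star $E(v)$ fails to be inclusion-maximal in $\mathcal{T}_H\cup\mathcal{S}_H$ precisely when $v$ has degree one with a neighbour of degree at least two, or when $v$ has degree two whose two neighbours are adjacent, the condition splits into two requirements on $M$: first, $M$ must saturate every vertex of a readily computable set $W\subseteq V(H)$ (the vertices $v$ with $E(v)\in\mathcal{M}_H$); second, $M$ must contain an edge of every triangle of $H$. So it remains to decide whether $H$ has a maximal matching $M\supseteq M_0$ meeting these two requirements.

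The next step is a preprocessing phase. For any matching $M_1$, the maximal matchings of $H$ containing $M_1$ are precisely the sets $M_1\cup M'$ with $M'$ a maximal matching of $H-V(M_1)$, so I may work inside $H-V(M_1)$ while remembering which vertices still have to be saturated and which triangles still have to be met. For a triangle $T$: if it is already met by the current matching, discard it; if two of its vertices are already covered by the current matching but not by an edge of $T$, declare the instance infeasible; if exactly one vertex of $T$ is covered, the opposite edge of $T$ is forced into $M$. Starting from $M_1:=M_0$ and repeating this process (discarding met triangles, detecting infeasibility, adding forced edges, enlarging $M_1$) reaches a fixpoint in polynomial time. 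At that point we are left with a graph $H^{\ast}:=H-V(M_1)$, a set $U\subseteq V(H^{\ast})$ of vertices still required to be saturated, and the requirement that $M$ meet every triangle of $H^{\ast}$ — and all remaining triangles are entirely contained in $H^{\ast}$, so this last requirement is exactly ``$M$ meets every triangle of $H^{\ast}$''. Moreover, since meeting a triangle and saturating a vertex are both preserved when edges are added to a matching, the word ``maximal'' can be dropped here: a maximal matching of $H^{\ast}$ with the required properties exists if and only if \emph{some} matching of $H^{\ast}$ has them. Thus the original instance is a yes-instance if and only if $H^{\ast}$ has a matching that saturates $U$ and contains an edge of every triangle of $H^{\ast}$.

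Finally, I would reduce this residual matching problem to a polynomially solvable one (solved, e.g., by Edmonds' maximum-weight matching algorithm~\cite{MR0183532}). A matching contains at most one edge of any triangle, so the triangle requirement says that exactly two vertices of each triangle of $H^{\ast}$ are matched to each other. The ``$M$ saturates $U$'' part can be enforced by edge weights exactly as in the proof of \Cref{thm:SCEXT-line}, and the triangle requirements would be enforced by attaching, to each triangle of $H^{\ast}$, an auxiliary gadget together with suitable weights chosen so that a matching of prescribed weight in the combined weighted graph exists if and only if the residual instance is a yes-instance. Since every step above runs in polynomial time, the theorem would follow.

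The main obstacle is precisely this last reduction. Unlike a star, a triangle is a non-local object for matchings: whether $M$ meets a triangle $\{a,b,c\}$ depends on which of the three edges $ab,bc,ca$ is used, not merely on whether $a,b,c$ are saturated, and triangles sharing a vertex interact — committing one triangle to a particular edge constrains the edges available at the shared vertex and hence at the neighbouring triangles. The delicate point is to design gadgets (or another encoding) that faithfully capture ``exactly one edge of $T$ lies in $M$'' without disturbing the saturation/weight bookkeeping and without introducing spurious solutions when triangles overlap, and to prove that these pieces compose correctly over all of $H^{\ast}$; the preprocessing of the previous step is what makes this manageable, by guaranteeing that every surviving triangle is entirely present in $H^{\ast}$. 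Carrying this analysis through — and verifying it on pathological cases such as large cliques, for which the answer must come out ``no'' — is where the bulk of the work lies.
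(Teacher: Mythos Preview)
Your proposal is explicitly incomplete: you leave the triangle constraint to be handled by unspecified gadgets and identify this as the ``main obstacle''. In fact no gadgets are needed, and the missing observation is short. You already note that a matching contains at most one edge of any triangle; the same is trivially true of any star. Hence, for \emph{every} matching $M$ and every $S\in\mathcal{M}_H$ we have $|M\cap S|\le 1$. Consequently, if one sets $w(e):=|\{S\in\mathcal{M}_H: e\in S\}|$, then for any matching $M$,
\[
\sum_{e\in M} w(e)\;=\;\sum_{S\in\mathcal{M}_H}|M\cap S|\;=\;|\{S\in\mathcal{M}_H: M\cap S\neq\emptyset\}|.
\]
Thus ``$M$ meets every set in $\mathcal{M}_H$'' is equivalent to ``$M$ has $w$-weight $|\mathcal{M}_H|$'', and a single maximum-weight matching computation decides the question. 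Your concern that overlapping triangles ``interact'' is a red herring: the count above is purely additive and indifferent to overlaps. This observation subsumes both your ``saturate $U$'' constraint (via the stars) and your triangle constraint simultaneously, so the split you perform and the ensuing preprocessing, while not incorrect, are unnecessary.

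This is precisely what the paper does. It enforces $C_H\subseteq M$ by deleting from $H$ every edge outside $C_H$ that meets $V(C_H)$ (your passage to $H-V(M_1)$ achieves essentially the same thing), observes that in the resulting graph any matching meeting all of $\mathcal{M}_H$ automatically contains $C_H$, and then applies the weight trick above. So your plan is on the right track but stops exactly one idea short of a proof; once you see that triangles behave no differently from stars for the weight count, the argument collapses to a direct reduction to maximum-weight matching with no gadgets at all.
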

\end{sloppypar}

\begin{proof}
Let $G$ be the complement of a line graph and let $C$ be a clique in $G$. We first compute the complement of $G$ and then, using~\Cref{thm:line}, a graph $H$ such that $G \cong \overline{L(H)}$. Thus $C$ corresponds to a matching $C_H$ in $H$ and by~\Cref{obs}, it suffices to check if $H$ contains a matching $M$ such that $C_H\subseteq M$ and $M$ intersects every set in $\mathcal{M}_H$, where $\mathcal{M}_H$ is the set of all inclusion-maximal sets in the family of triangles and stars of $H$. We reduce this problem to an instance of the maximum-weight matching problem in $H'$ where $H'$ is the graph obtained from $H$ by deleting from it all edges not in $C_H$ that share an endpoint with an edge in $C_H$ (as no such edge can be part of a matching containing $C_H$).

Before explaining the reduction, we show that for every matching $M$ in $H'$ that intersects every set in $\mathcal{M}_H$, we have $C_H\subseteq M$. Indeed, if $e\in C_H$ then there exists a set $S\in \mathcal{M}_H$ such that $e\in S$; moreover, the construction of $H'$ implies that $S\cap E(H') = \{e\}$ and hence $M$ can only intersect $S$ in $e$; this implies $e\in M$. Since $e\in C_H$ was arbitrary, we have $C_H\subseteq M$, as claimed.

The reduction is as follows. We compute the set $\mathcal{M}_H$ and assign to each edge $e\in E(H')$ weight $w(e)$, which denotes the number of sets in $\mathcal{M}_H$ containing $e$. Note that no set in $\mathcal{M}_H$ contains two edges from the same matching. It follows that for any matching $M$ in $H'$, the total number of sets in $\mathcal{M}_H$ intersected by some edge in $M$ is equal to the total weight of $M$ (with respect to $w$). Consequently, $H'$ contains a matching containing $C_H$ that intersects every set in $\mathcal{M}_H$, if and only if $H'$ contains a matching that intersects every set in $\mathcal{M}_H$, if and only if
$H'$ contains a matching of total $w$-weight at least (equivalently, exactly) $k$, where $k$ is the cardinality of $\mathcal{M}_H$. The weight function $w(e)$ can be computed in polynomial time and the maximum-weight matching problem is also polynomial-time solvable, hence the theorem is proved.
\end{proof}

\Cref{lem:reduction} and~\Cref{thm:SCEXT-co-line} imply the following.

\begin{sloppypar}
\begin{corollary}\label{cor:co-line}
{\sc Strong Clique}, {\sc Strong Clique Existence}, {\sc Strong Clique Vertex Cover}, {\sc Strong Clique Edge Cover}, and
{\sc Strong Clique Partition} problems are
solvable in polynomial time in the class of complements of line graphs.
\end{corollary}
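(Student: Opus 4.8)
The plan is to deduce this directly from the reduction machinery already established, with essentially no further work. I would instantiate \Cref{lem:reduction} with $\mathcal{G}$ taken to be the class of complements of line graphs. The hypothesis of that lemma — that {\sc Strong Clique Extension} is solvable in polynomial time on $\mathcal{G}$ — is precisely the statement of \Cref{thm:SCEXT-co-line}. Since \Cref{lem:reduction} asserts that polynomial-time solvability of {\sc Strong Clique Extension} on a class $\mathcal{G}$ transfers to each of {\sc Strong Clique}, {\sc Strong Clique Existence}, {\sc Strong Clique Vertex Cover}, {\sc Strong Clique Edge Cover}, and {\sc Strong Clique Partition} on the same class, the claimed result follows at once.

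Before concluding I would double-check that no closure property of $\mathcal{G}$ under any graph operation is needed. Inspecting the proof of \Cref{lem:reduction}, each of the five reductions there is carried out on the input graph $G$ itself: one tests whether the empty clique extends to a strong clique (for {\sc Strong Clique Existence}), whether every singleton clique does (for {\sc Strong Clique Vertex Cover}), whether every edge does (for {\sc Strong Clique Edge Cover}), and combines an extension test with a trivial maximality check (for {\sc Strong Clique}), while {\sc Strong Clique Partition} simply calls {\sc Strong Clique} on $G$ for each clique of the given partition. No auxiliary graph lying outside $\mathcal{G}$ is ever constructed, so the instantiation is unproblematic.

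I expect no real obstacle here: all the substance lies in \Cref{thm:SCEXT-co-line} (the reduction of {\sc Strong Clique Extension} for complements of line graphs to a maximum-weight matching computation in the derived graph $H'$, via the translation in \Cref{obs}) and in the general reduction of \Cref{lem:reduction}. The corollary itself is a purely bookkeeping consequence of combining the two, exactly paralleling how \Cref{cor:line} was obtained from \Cref{lem:reduction} and \Cref{thm:SCEXT-line}.
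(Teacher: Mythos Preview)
Your proposal is correct and follows exactly the paper's approach: the corollary is stated as an immediate consequence of \Cref{lem:reduction} and \Cref{thm:SCEXT-co-line}, with no additional argument given. Your extra check that \Cref{lem:reduction} requires no closure of $\mathcal{G}$ under auxiliary constructions is sound and more careful than the paper itself.
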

\end{sloppypar}

An immediate consequence of the fact that {\sc Strong Clique Partition} is solvable in polynomial time in the class of complements of line graphs is the following.

\begin{sloppypar}
\begin{corollary}\label{cor:NP}
{\sc Strong Clique Partition Existence} restricted to the class of complements of line graphs is in NP.
\end{corollary}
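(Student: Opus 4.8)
The plan is to exhibit, for every yes-instance, a short certificate that is verifiable in polynomial time. The natural certificate for a yes-instance $G$ of {\sc Strong Clique Partition Existence} is a partition $\mathcal{P} = \{C_1,\ldots,C_r\}$ of $V(G)$ into cliques such that every $C_i$ is strong. Such a certificate has size polynomial in $|V(G)|$: a partition into cliques has $r\le |V(G)|$ parts, and each part is a subset of $V(G)$, so the certificate can be written down using $O(|V(G)|^2)$ bits.

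To verify the certificate I would proceed in two steps. First, check in polynomial time that $\mathcal{P}$ is indeed a partition of $V(G)$ and that each $C_i$ induces a clique; this is routine. Second, it remains to confirm that every $C_i$ is strong — but this is precisely an instance of {\sc Strong Clique Partition} with input $(G,\mathcal{P})$, which by \Cref{cor:co-line} is solvable in polynomial time when $G$ is a complement of a line graph. Hence the whole verification runs in polynomial time, witnessing membership in \NP.

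One minor point worth addressing is whether the verifier is entitled to assume that the input $G$ genuinely belongs to the class of complements of line graphs. Under the promise-problem reading of ``restricted to the class'', nothing further is needed. For the unpromised formulation, one can simply prepend to the verification a test, via \Cref{thm:line} applied to $\overline{G}$, of whether $\overline{G}$ is a line graph; this runs in linear time and so does not affect membership in \NP. I do not anticipate any real obstacle here: the entire content of the corollary is the observation that certificate verification coincides with {\sc Strong Clique Partition}, whose tractability on complements of line graphs was already established in \Cref{cor:co-line}; the only mild subtlety is ensuring the certificate has polynomial size, which holds because a clique partition has at most $|V(G)|$ parts.
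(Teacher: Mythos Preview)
Your proposal is correct and follows essentially the same approach as the paper: the corollary is stated there as an immediate consequence of the polynomial-time solvability of {\sc Strong Clique Partition} on complements of line graphs (part of \Cref{cor:co-line}), with the implicit certificate being exactly the partition into strong cliques that you describe. Your write-up simply makes explicit the details (certificate size, the optional membership check via \Cref{thm:line}) that the paper leaves to the reader.
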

\end{sloppypar}

In contrast to~\Cref{cor:co-line}, we show next that
{\sc Strong Clique Partition Existence} is
\NP-complete in the class of complements of line graphs.
The proof is based on the following.

\begin{lemma}
\label{lem:complements-of-line-graphs}
Let $k\ge 2$ be an integer and let $G$ be a triangle-free $k$-regular graph.
Then $G$ is $k$-edge-colorable if and only if $\overline{L(G)}$ is localizable.
\end{lemma}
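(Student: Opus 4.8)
The plan is to translate localizability of $\overline{L(G)}$ into a statement about partitioning $E(G)$ into perfect matchings, and then to observe that such a partition is precisely a proper $k$-edge-coloring of $G$. First I would record how the hypotheses act on the combinatorial data of \Cref{def:M_G}. Since $k\ge 2$, the graph $G$ has no isolated vertices, so \Cref{obs} applies; and since $G$ is triangle-free, $\mathcal{T}_G=\emptyset$, so $\mathcal{M}_G=\mathcal{S}_G=\{E(v)\mid v\in V(G)\}$. Because $G$ is $k$-regular, each star $E(v)$ consists of exactly $k$ edges, and a matching $F$ of $G$ meets $E(v)$ if and only if $F$ saturates $v$.

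Next I would identify the strong cliques of $\overline{L(G)}$. By \Cref{obs}, a set $F\subseteq E(G)$ is a strong clique in $\overline{L(G)}$ if and only if $F$ is a maximal matching of $G$ that intersects every member of $\mathcal{M}_G$; by the previous paragraph this means a maximal matching saturating every vertex of $G$, i.e.\ a perfect matching of $G$ (conversely, every perfect matching is a maximal matching saturating all vertices). Hence the strong cliques of $\overline{L(G)}$ are exactly the perfect matchings of $G$, and therefore $\overline{L(G)}$ is localizable if and only if $E(G)$ can be partitioned into perfect matchings.

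It then remains to show that $E(G)$ admits a partition into perfect matchings if and only if $G$ is $k$-edge-colorable. For the forward direction, any partition of $E(G)$ into matchings is a proper edge-coloring, and a partition into perfect matchings has exactly $k$ classes, since $G$ has $k\,|V(G)|/2$ edges and each perfect matching has $|V(G)|/2$ edges. For the converse, given a proper $k$-edge-coloring of $G$, fix a vertex $v$: its $k$ incident edges receive $k$ pairwise distinct colors, hence all $k$ colors, so every color class saturates $v$; as $v$ is arbitrary, every color class is a perfect matching, giving the desired partition. Combining the two equivalences yields the lemma.

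The argument is essentially a dictionary translation through \Cref{obs}; the only steps needing care are the identification of strong cliques of $\overline{L(G)}$ with perfect matchings of $G$ — where triangle-freeness is exactly what removes triangle edge-sets from $\mathcal{M}_G$, so that "meets every member of $\mathcal{M}_G$" collapses to "saturates every vertex" — and the short counting argument fixing the number of classes at exactly $k$.
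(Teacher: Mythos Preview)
Your proof is correct and complete, but it follows a different route from the paper's. The paper argues at the level of graph invariants: it first observes that, since $G$ is triangle-free and $k$-regular, the maximal cliques of $L(G)$ are exactly the stars $E(v)$, all of size $k$; hence $\overline{L(G)}$ is well-covered with $\alpha(\overline{L(G)})=k$. It then invokes \Cref{prop:alpha-chi-bar} (localizable $\Longleftrightarrow$ well-covered and semi-perfect) to conclude that $\overline{L(G)}$ is localizable if and only if $\theta(\overline{L(G)})=k$, and finishes with the identity $\theta(\overline{L(G)})=\chi(L(G))=\chi'(G)$.

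Your argument instead computes the strong cliques of $\overline{L(G)}$ explicitly via \Cref{obs}, identifies them with the perfect matchings of $G$, and then matches ``partition into perfect matchings'' with ``proper $k$-edge-coloring'' by a short counting step. This is more elementary and self-contained: it bypasses \Cref{prop:alpha-chi-bar} entirely and never needs to mention well-coveredness or the clique cover number. The paper's approach, by contrast, is more conceptual---it situates the lemma as an instance of the general well-covered/semi-perfect characterization---and yields as a byproduct the fact (used in the proof of \Cref{cor:alpha}) that $\overline{L(G)}$ has independence number exactly $k$. Either argument is perfectly adequate here.
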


\begin{sloppypar}
\begin{proof}
First, note that since $G$ is $k$-regular, we have $\chi'(G) \ge k$. In particular, $G$ is $k$-edge-colorable if and only if $\chi'(G) = k$.
Since $G$ has no vertices of degree $0$ or $1$, no two stars of $G$ are comparable with respect to set inclusion, which implies that the maximal cliques of $L(G)$ are exactly the stars of $G$. Since $G$ is $k$-regular, all maximal cliques of $L(G)$ are of size $k$ and consequently $\overline{L(G)}$ is a well-covered graph of independence number $k$. It now follows from~\Cref{prop:alpha-chi-bar} that $\overline{L(G)}$ is localizable if and only if $\theta(\overline{L(G)})= k$. The statement of the lemma now follows from the fact that $\theta(\overline{L(G)}) = \chi (L(G)) = \chi'(G)$ is the chromatic index of $G$.
\end{proof}
\end{sloppypar}

\begin{sloppypar}
\begin{theorem}\label{thm:complements-of-line-of-triangle-free}
For every integer $k\ge 3$, {\sc Strong Clique Partition Existence} (a.k.a. {\sc Localizability}) is \NP-complete in the class of complements of line graphs
of triangle-free $k$-regular graphs.
\end{theorem}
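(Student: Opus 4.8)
The plan is to prove the statement in two parts: membership in \NP, and \NP-hardness for each fixed $k\ge 3$. Membership in \NP\ is immediate from \Cref{cor:NP}, since complements of line graphs of triangle-free $k$-regular graphs form a subclass of complements of line graphs, and {\sc Strong Clique Partition Existence} has already been shown to lie in \NP\ on the latter class. So the work is in the hardness.

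For the hardness, the idea is to reduce from $k$-edge-colorability of $k$-regular graphs and let \Cref{lem:complements-of-line-graphs} perform the translation. The map $G\mapsto\overline{L(G)}$ is computable in polynomial time (build the line graph, then complement it), and its output is by construction a complement of the line graph of $G$; in particular, if $G$ is triangle-free and $k$-regular then $\overline{L(G)}$ lies in exactly the class named in the theorem. By \Cref{lem:complements-of-line-graphs}, such a $G$ is $k$-edge-colorable if and only if $\overline{L(G)}$ is localizable, i.e.\ is a yes-instance of {\sc Strong Clique Partition Existence}. Hence it suffices to prove that deciding whether $\chi'(G)=k$ is \NP-hard when $G$ ranges over triangle-free $k$-regular graphs; the reduction then simply outputs $\overline{L(G)}$, and \NP-completeness follows by combining this with the membership in \NP\ above.

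It thus remains to establish \NP-hardness of $k$-edge-colorability of triangle-free $k$-regular graphs. I would take as starting point the classical theorem that deciding $k$-edge-colorability is \NP-complete for $k$-regular graphs for every fixed $k\ge 3$ (Holyer for $k=3$, Leven and Galil in general), and reduce this to the triangle-free case by eliminating triangles with a local gadget. The guiding observation for $k=3$ is that in any proper $3$-edge-coloring the three edges leaving a triangle must receive three pairwise distinct colors, and conversely any prescription of three distinct colors on these edges extends over the triangle; so a triangle behaves, color-wise, exactly like a single vertex of degree $3$. This suggests contracting each triangle to a vertex, but one has to keep the graph simple and avoid creating new triangles — triangles sharing a vertex or an edge, diamonds, and $K_4$-components require separate treatment — so in general one uses an explicit triangle-free, simple, $k$-regular replacement gadget at each triangle, together with ad hoc handling of the finitely many degenerate local configurations; an analogous but larger gadget works for every $k\ge 3$. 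A fully self-contained alternative is to redo a reduction from a suitably restricted version of $3$-{\sc SAT} to $k$-edge-colorability using triangle-free gadgets from the outset, at the cost of a longer case analysis.

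The main obstacle is precisely this triangle-elimination step: producing a gadget that is simultaneously triangle-free, simple, $k$-regular, and "transparent" to proper edge-colorings in both directions, and checking that gluing a copy of it in place of every triangle of an arbitrary $k$-regular graph neither reintroduces triangles nor creates parallel edges at the interfaces. Everything else — the polynomial-time computability of $G\mapsto\overline{L(G)}$, the equivalence with localizability, and membership in \NP\ — is already supplied by \Cref{lem:complements-of-line-graphs} and \Cref{cor:NP}.
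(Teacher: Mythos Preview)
Your overall structure is exactly the paper's: membership in \NP\ from \Cref{cor:NP}, and hardness via the polynomial-time map $G\mapsto\overline{L(G)}$ combined with \Cref{lem:complements-of-line-graphs}, reducing from $k$-edge-colorability of triangle-free $k$-regular graphs.

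The one substantive difference is that what you flag as ``the main obstacle''---\NP-hardness of deciding $\chi'(G)=k$ for triangle-free $k$-regular $G$---is already a theorem in the literature: Cai and Ellis~\cite{journals/dam/CaiE91} proved exactly this for every $k\ge 3$. The paper simply cites that result, so the entire triangle-elimination gadget program you sketch (and the attendant worries about diamonds, $K_4$'s, reintroduced triangles at interfaces, and the general-$k$ analogue) is unnecessary. With that citation in hand, your proof becomes complete and identical to the paper's.
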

\end{sloppypar}

\begin{proof}
The fact that the problem is in NP follows from~\Cref{cor:NP}.
Cai and Ellis showed in~\cite{journals/dam/CaiE91} that for every $k\ge 3$, it is \NP-complete to determine whether a given $k$-regular triangle-free graph is $k$-edge-colorable. Thus, if $k\ge 3$ and $G$ is a given $k$-regular triangle-free graph, \Cref{lem:complements-of-line-graphs} implies that $G$ is $k$-edge-colorable if and only if the complement of its line graph is localizable. The theorem follows.
\end{proof}

We conclude the section with some remarks related to~\Cref{thm:complements-of-line-of-triangle-free}.
The fact that testing if a given complement of a line graph is localizable is \NP-complete is in contrast with the fact that the more general property of well-coveredness can be tested in polynomial time in the class of complements of line graphs. Indeed, in order to verify whether $\overline{L(G)}$ is well-covered, it suffices to compute $G$ (using~\Cref{thm:line}), compute $\mathcal{M}_G$ (see~\Cref{def:M_G}), and test if all sets in $\mathcal{M}_G$ are of the same size.

\Cref{thm:complements-of-line-of-triangle-free} also has the following consequence.\footnote{\Cref{cor:alpha} can also be derived using a reduction from $k$-colorability of triangle-free graphs, following the approach used in the proof of~\cite[Theorem 3.3]{HMR2018}.}

\begin{sloppypar}
\begin{theorem}\label{cor:alpha}
For every integer $k\ge 3$, {\sc Strong Clique Partition Existence} (a.k.a. {\sc Localizability}) is \NP-complete in the class of graphs of independence number $k$.
\end{theorem}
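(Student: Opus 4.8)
The plan is to derive \Cref{cor:alpha} directly from \Cref{thm:complements-of-line-of-triangle-free}, by checking that the hard instances produced there already lie in the class of graphs of independence number $k$, and then to verify membership in NP separately. The key observation for the first part is that complements of line graphs of triangle-free $k$-regular graphs have independence number exactly $k$; for the second part, the relevant fact is that strongness of a clique can be tested in polynomial time once the independence number is bounded.

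For the hardness direction I would start from a triangle-free $k$-regular graph $G$ with $k\ge 3$. As already observed in the proof of \Cref{lem:complements-of-line-graphs}, since $G$ is triangle-free and $k$-regular with $k\ge 3$, no two stars of $G$ are comparable with respect to inclusion and $\mathcal{T}_G = \emptyset$, so the maximal cliques of $L(G)$ are precisely the stars of $G$, each of size $k$. Hence $\omega(L(G)) = k$ and therefore $\alpha(\overline{L(G)}) = \omega(L(G)) = k$. Thus the polynomial-time computable map $G \mapsto \overline{L(G)}$ (computable via \Cref{thm:line}) sends triangle-free $k$-regular graphs to graphs of independence number exactly $k$, and by \Cref{thm:complements-of-line-of-triangle-free} deciding localizability of the image is \NP-hard. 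This yields \NP-hardness of {\sc Strong Clique Partition Existence} in the class of graphs of independence number $k$.

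For membership in NP, let $G$ be a graph with $\alpha(G) \le k$ for the fixed $k$. By \Cref{lem:strong-cliques}, a clique $C$ of $G$ fails to be strong precisely when some independent set $I \subseteq V(G) \setminus C$ dominates $C$; any such $I$ has at most $k$ vertices, so it suffices to enumerate the $O(n^k)$ subsets of $V(G)$ of size at most $k$ and test each for being an independent set outside $C$ that dominates $C$. Consequently {\sc Strong Clique} and, by checking each clique in a given partition, {\sc Strong Clique Partition} are solvable in polynomial time in this class, so a partition of $V(G)$ into cliques serves as a polynomially verifiable certificate of localizability. Hence {\sc Strong Clique Partition Existence}, restricted to graphs of independence number $k$, is in NP, mirroring the argument behind \Cref{cor:NP}.

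I do not anticipate a genuine obstacle: both ingredients are routine given \Cref{thm:complements-of-line-of-triangle-free}. The only step needing a little care is the NP-membership argument, which must use boundedness of the independence number to sidestep the co-\NP-completeness of recognizing strong cliques in general graphs. As indicated in the footnote, should one prefer a self-contained hardness proof not routed through edge-colorability, an alternative is to adapt the reduction from $k$-colorability of triangle-free graphs used in the proof of~\cite[Theorem 3.3]{HMR2018}; I would keep that in reserve.
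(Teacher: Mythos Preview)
Your proposal is correct and follows essentially the same route as the paper: hardness is inherited from \Cref{thm:complements-of-line-of-triangle-free} via the observation that $\alpha(\overline{L(G)})=k$ for triangle-free $k$-regular $G$, and NP-membership comes from the polynomial-time testability of strongness when the independence number is bounded. The only cosmetic difference is that for the latter you invoke \Cref{lem:strong-cliques} and enumerate dominating independent sets of size at most $k$, whereas the paper enumerates the $\mathcal{O}(|V(G)|^k)$ maximal independent sets directly and checks intersection with $C$; both arguments are equivalent and equally valid.
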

\end{sloppypar}

\begin{sloppypar}
\begin{proof}
As noted in the proof of~\Cref{lem:complements-of-line-graphs},
if $G$ is a $k$-regular triangle-free graph, then $\overline{L(G)}$ has independence number $k$. Hence, \NP-hardness of the problem follows from~\Cref{thm:complements-of-line-of-triangle-free}. Membership in NP follows from the fact that for every fixed $k$, {\sc Strong Clique} is polynomial-time solvable
in the class of graphs of independence number $k$. Indeed, to test if a given clique $C$ in such a graph $G$ is strong, one only needs to enumerate all $\mathcal{O}(|V(G)|^k)$ maximal independent sets of $G$ and test if $C$ intersects each of them. Consequently, a partition of the vertex set of $G$ into $k$ strong cliques is a polynomially verifiable certificate.
\end{proof}
\end{sloppypar}

\begin{sloppypar}
The statement of~\Cref{cor:alpha} is sharp in the sense that localizable graphs of independence number at most two are recognizable in polynomial time. Indeed, using \Cref{prop:alpha-chi-bar} it is not difficult to see that a graph of independence number two is localizable if and only if its complement is a bipartite graph without isolated vertices.
Finally, we remark that the result of~\Cref{cor:alpha} contrasts with the fact that for every fixed $k$, testing if a given graph $G$ with $\alpha(G)\le k$ is well-covered can be done in polynomial time, simply by enumerating all the $\mathcal{O}(|V(G)|^k)$ independent sets and comparing any pair of maximal ones with respect to their cardinality. In particular, unless \hbox{P = NP}, one cannot obtain a simultaneous strengthening of~\Cref{thm:weakly-chordal} and~\Cref{cor:alpha} that would establish hardness of recognizing localizable graphs within the class of weakly chordal graphs of bounded independence number. Indeed, the above observation together with~\Cref{cor:perfect} implies that this problem is solvable in polynomial time.
\end{sloppypar}

\section{Graphs of small maximum degree}\label{sec:max3}

In this section, we study the six strong clique problems in graphs of small maximum degree. For the first five problems from the list, we adopt a similar approach as in~\Cref{sec:co-line}, by showing that {\sc Strong Clique Extension}, the auxiliary problem introduced in~\Cref{sec:a-problem}, is solvable in polynomial time in any class of graphs of bounded degree and, more generally, in any class of graphs of bounded clique number. Determining the complexity status of the `last' of the six strong clique problems, {\sc Strong Clique Partition Existence} (a.k.a.~{\sc Localizability}), seems to be more challenging. For this problem, we show polynomial-time solvability in the class of subcubic graphs.

\begin{proposition}\label{prop:omega-k}
For every positive integer $k$, {\sc Strong Clique Extension} is solvable in polynomial time in the class of graphs of clique number at most $k$.
\end{proposition}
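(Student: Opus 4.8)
The plan is to reduce everything to brute-force enumeration over polynomially many objects, using the fact that when $\omega(G)\le k$ every clique of $G$ has at most $k$ vertices.

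First I would note that a strong clique is necessarily maximal (any vertex adjacent to all of a clique $C'$ extends to a maximal independent set avoiding $C'$), and that every clique $C'$ with $C\subseteq C'$ satisfies $|C'|\le \omega(G)\le k$. Hence the cliques containing $C$ are obtained by adding to $C$ at most $k-|C|$ further vertices, so there are only $\mathcal{O}(|V(G)|^{k})$ of them and they can be listed in polynomial time. It therefore suffices to exhibit, for fixed $k$, a polynomial-time subroutine deciding whether a given clique $C'$ of $G$ is strong: the answer to {\sc Strong Clique Extension} is then ``yes'' exactly when at least one clique $C'\supseteq C$ passes this test.

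To test whether a clique $C'$ is strong, I would invoke~\Cref{lem:strong-cliques}: $C'$ is not strong if and only if it is dominated by some independent set $I\subseteq V(G)\setminus C'$. The crucial observation is that if such an $I$ exists, then one with at most $|C'|\le k$ vertices exists --- for each $v\in C'$ choose $g(v)\in I\cap N_G(v)$ (nonempty as $I$ dominates $C'$), and note that $\{g(v):v\in C'\}$ is a subset of $I$, hence independent, is contained in $V(G)\setminus C'$, and still dominates $C'$. Consequently $C'$ fails to be strong if and only if there is a choice of a neighbor $g(v)\in N_G(v)\setminus C'$ for every $v\in C'$ such that $\{g(v):v\in C'\}$ is an independent set. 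There are at most $|V(G)|^{|C'|}\le |V(G)|^{k}$ such choices, and for each one independence of the resulting set can be checked in $\mathcal{O}(k^2)$ time; this gives the desired subroutine, and combining it with the outer enumeration of cliques containing $C$ yields a polynomial-time algorithm for every fixed $k$.

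The step that genuinely uses the bounded clique number --- and the main thing to get right --- is exactly this last reduction. One cannot afford to test strongness of $C'$ directly from the definition by enumerating all maximal independent sets of $G$, since already for $k=2$ (triangle-free graphs) there may be exponentially many of them, as witnessed by a perfect matching on $2n$ vertices. The observation that a clique can be dominated by an independent set only if it can be dominated by one of size at most $\omega(G)$ is what keeps the search space polynomial, and I expect no obstacle beyond carefully bounding the running time of the two nested enumerations.
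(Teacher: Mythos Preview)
Your proposal is correct and follows essentially the same approach as the paper: enumerate the $\mathcal{O}(|V(G)|^k)$ candidate cliques containing $C$, and for each one use \Cref{lem:strong-cliques} together with the observation that a dominating independent set of size at most $|C'|\le k$ suffices, so one can brute-force over $\mathcal{O}(|V(G)|^k)$ candidates. The only cosmetic differences are that the paper enumerates maximal cliques and minimal dominating subsets of $N_G(C')$, whereas you enumerate all supercliques and tuples $(g(v))_{v\in C'}$; neither change affects correctness or the polynomial bound.
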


\begin{proof}
Fix a positive integer $k$ and let $G$ be a given graph with clique number at most $k$. Since $k$ is a constant, the $\mathcal{O}(|V(G)|^k)$ maximal cliques of $G$ can be enumerated in polynomial time. Thus, it suffices to show that {\sc Strong Clique} can be solved in polynomial time. Indeed, if this is the case, then an efficient algorithm for testing if a given clique can be extended to a strong one can be obtained by checking if any of the maximal cliques of $G$ that contains the given clique is strong.

By~\Cref{lem:strong-cliques}, a clique $C$ in $G$ is not strong if and only if it is dominated by an independent set $I \subseteq V(G)\setminus C$. However, if this is the case, then any minimal set $I'\subseteq I$ that dominates $C$ is contained in $N_G(C)$ and contains at most $k$ vertices. Consequently, to test if $C$ is strong, it suffices to enumerate all $\mathcal{O}(|V(G)|^k)$ subsets of
$N_G(C)$ of size at most $k$ and test whether any of them is independent and dominates $C$. This completes the proof.
\end{proof}

\Cref{prop:omega-k} and \Cref{lem:reduction} imply the following.

\begin{corollary}\label{cor:omega-k}
For every positive integer $k$, each of the problems {\sc Strong Clique}, {\sc Strong Clique Existence}, {\sc Strong Clique Vertex Cover}, {\sc Strong Clique Edge Cover}, and {\sc Strong Clique Partition} is solvable in polynomial time in the class of graphs of clique number at most $k$.
\end{corollary}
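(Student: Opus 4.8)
The plan is to obtain \Cref{cor:omega-k} with essentially no additional work, by combining the two results that immediately precede it. Fix a positive integer $k$ and let $\mathcal{G}_k$ denote the class of graphs whose clique number is at most $k$. By \Cref{prop:omega-k}, {\sc Strong Clique Extension} is solvable in polynomial time for every graph in $\mathcal{G}_k$; this is the one substantive ingredient, and it has already been proved (via the observation that a minimal independent set dominating a clique lies in that clique's neighborhood and has size at most $k$, so that for fixed $k$ both the enumeration of maximal cliques and the test for strongness run in polynomial time).

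Next I would invoke \Cref{lem:reduction} with the class $\mathcal{G}$ taken to be $\mathcal{G}_k$. That lemma states precisely that whenever {\sc Strong Clique Extension} is polynomial-time solvable on a class of graphs, then each of {\sc Strong Clique}, {\sc Strong Clique Existence}, {\sc Strong Clique Vertex Cover}, {\sc Strong Clique Edge Cover}, and {\sc Strong Clique Partition} is polynomial-time solvable on that same class. Applying it to $\mathcal{G}_k$ yields exactly the assertion of \Cref{cor:omega-k}, and the proof is complete.

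There is, candidly, no real obstacle here: the only point that could conceivably need checking is that \Cref{lem:reduction} is applicable to $\mathcal{G}_k$, but since that lemma is stated for an arbitrary class of graphs and its reductions only ever query {\sc Strong Clique Extension} (or {\sc Strong Clique}) on the input graph itself — never on a modified graph — there is nothing to verify about closure of $\mathcal{G}_k$. It is worth recording two immediate special cases: since $\Delta(G)\le d$ implies $\omega(G)\le d+1$, the corollary gives polynomial-time algorithms for these five problems on every class of bounded maximum degree, and in particular (taking $k=4$) on the class of subcubic graphs, matching the entries for these five problems in the subcubic column of \Cref{table-results}; the remaining problem, {\sc Strong Clique Partition Existence}, is not handled by this argument and is treated separately in the sequel.
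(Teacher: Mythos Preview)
Your proposal is correct and matches the paper's approach exactly: the paper states the corollary as an immediate consequence of \Cref{prop:omega-k} and \Cref{lem:reduction}, with no further argument. Your additional remarks about bounded maximum degree and subcubic graphs anticipate precisely what the paper records next as \Cref{cor:delta-k}.
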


Since every graph of maximum degree at most $k$ has clique number at most $k+1$, \Cref{prop:omega-k} and~\Cref{cor:omega-k} also have the following consequence.

\begin{sloppypar}
\begin{corollary}\label{cor:delta-k}
For every non-negative integer $k$, each of the problems
{\sc Strong Clique Extension}, {\sc Strong Clique}, {\sc Strong Clique Existence}, {\sc Strong Clique Vertex Cover}, {\sc Strong Clique Edge Cover}, and {\sc Strong Clique Partition} is solvable in polynomial time in the class of graphs of maximum degree at most $k$.
\end{corollary}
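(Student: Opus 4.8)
The plan is to obtain \Cref{cor:delta-k} directly from \Cref{prop:omega-k} and \Cref{cor:omega-k}, exploiting the elementary fact that bounded maximum degree forces bounded clique number. Concretely, the first step is to observe that if $G$ is a graph with $\Delta(G)\le k$, then every clique $C$ of $G$ satisfies $|C|\le k+1$: fixing any vertex $v\in C$, all remaining vertices of $C$ lie in $N_G(v)$, and $|N_G(v)|\le k$. Hence $\omega(G)\le k+1$, so the class of graphs of maximum degree at most $k$ is contained in the class of graphs of clique number at most $k+1$.

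The second step is simply to apply the earlier results with the shifted parameter $k+1$. By \Cref{prop:omega-k} (with $k+1$ in place of $k$), {\sc Strong Clique Extension} is solvable in polynomial time on graphs of clique number at most $k+1$, hence on graphs of maximum degree at most $k$. Likewise, \Cref{cor:omega-k} applied with parameter $k+1$ gives polynomial-time solvability of {\sc Strong Clique}, {\sc Strong Clique Existence}, {\sc Strong Clique Vertex Cover}, {\sc Strong Clique Edge Cover}, and {\sc Strong Clique Partition} in the same class. Together these cover all six problems appearing in the statement, so the corollary follows.

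There is essentially no obstacle: the entire content of the corollary is already packaged in the two preceding results, and the only thing to verify is the class containment, which is immediate (and for $k\le 1$ the statement is trivial in any case, since such graphs have no edges or only pairwise disjoint edges). The write-up will therefore be a two-line deduction; the only point requiring minor care is bookkeeping — correctly recording the parameter shift from $k$ to $k+1$ and reproducing the list of six problems faithfully.
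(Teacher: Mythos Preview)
The proposal is correct and matches the paper's approach exactly: the paper simply notes that every graph of maximum degree at most $k$ has clique number at most $k+1$ and invokes \Cref{prop:omega-k} and \Cref{cor:omega-k}, which is precisely what you do.
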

\end{sloppypar}

\begin{sloppypar}
We now turn to the more challenging {\sc Strong Clique Partition Existence} (a.k.a.~{\sc Localizability}) problem. Determining the complexity status of this problem in the class of $K_4$-free graphs, or more generally, in any class of graphs of bounded clique number, was stated as an open question in~\cite{HMR2018}. We are also not aware of any complexity results for this problem in classes of graphs of bounded maximum degree. This is in contrast with the fact that there is a linear-time algorithm for recognizing well-covered graphs of bounded maximum degree, due to Caro et al.~\cite{MR1640952}. A characterization of well-covered subcubic graphs was given by Ramey~\cite{MR2691098}.
\end{sloppypar}

\begin{sloppypar}
The structure of graphs of maximum degree at most two is so restricted that a polynomial-time algorithm for {\sc Strong Clique Partition Existence} (a.k.a.~{\sc Localizability}) in this graph class is straightforward. In what follows, we prove that the problem is also solvable in polynomial time in the more general class of subcubic graphs. We begin with a lemma.
\end{sloppypar}

\begin{lemma}
\label{lem:subcubic}
Let $G=(V,E)$ be a connected subcubic graph admitting a strong clique $C$ of size $3$. If there exists a strong clique $C'$ in  $G$ such that $C\neq C'$ and $C\cap C'\neq \emptyset$, then  $G$ is localizable if and only if $G\cong \overline{P_2+P_3}$.
\end{lemma}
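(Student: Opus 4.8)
The plan is to prove both directions. For the ``if'' direction I would just exhibit the required structure in $\overline{P_2+P_3}$: labelling its vertices so that the non-edges are $ab$, $cd$, $de$ (equivalently, $\overline{P_2+P_3}$ is $K_{2,3}$ with parts $\{a,b\}$ and $\{c,d,e\}$ together with the extra edge $ce$), one has $\alpha=2$, the family $\{\{a,c,e\},\{b,d\}\}$ is an $\alpha$-clique cover, and a one-line check via \Cref{lem:strong-cliques} shows both parts are strong; hence $\overline{P_2+P_3}$ is localizable, and $\{a,c,e\}$, $\{b,c,e\}$ are two distinct intersecting strong cliques of size $3$. For the ``only if'' direction I would assume in addition that $G$ is localizable, so by \Cref{prop:alpha-chi-bar} $G$ is well-covered and every $\alpha$-clique cover of $G$ consists of strong cliques. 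Writing $C=\{x,y,z\}$ with $z\in C\cap C'$ and noting that strong cliques are maximal, $C'\not\subseteq C$, so $C'$ contains a vertex outside $C$; since $C'\subseteq N_G[z]$ this forces $z$ to have a third neighbour $w\notin C$ with $w\in C'$. Using maximality of $C$ and $C'$ together with subcubicity I would show $C'\in\{\{z,w\},\{x,z,w\},\{y,z,w\}\}$ and split into two cases, the last two options being symmetric.

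In the case $C'=\{z,w\}$: maximality of $C'$ forces $w\not\sim x,y$, so $N_G(z)=\{x,y,w\}$. If $w$ had two neighbours besides $z$, each would have to be adjacent to both $x$ and $y$ for $\{z,w\}$ to be strong (otherwise an independent pair dominates it), contradicting that $x,y$ are subcubic; so either $w$ is a pendant vertex at $z$, or $d_G(w)=2$ and its other neighbour $s$ is adjacent to both $x$ and $y$, which determines $N_G(x)$, $N_G(y)$, $N_G(s)$ completely and, by connectedness, gives $G\cong\overline{P_2+P_3}$. To eliminate the pendant subcase I would take a maximal independent set $I\ni z$ and observe that $J=(I\setminus\{z\})\cup\{w\}$ is independent of size $|I|$, hence itself maximal by well-coveredness, yet disjoint from $C$ --- contradicting strongness of $C$.

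In the case $C'=\{x,z,w\}$: then $N_G(x)=\{y,z,w\}$, $N_G(z)=\{x,y,w\}$, $w\not\sim y$, and $G[\{x,y,z,w\}]\cong K_4-e$. If $d_G(y)=2$, then strongness of $C'$ forces $d_G(w)=2$ (and, symmetrically, strongness of $C$ does the converse), so $G\cong K_4-e$, which is not well-covered --- impossible. Hence $d_G(y)=d_G(w)=3$, and strongness of $C$ forces the external neighbour of $y$ to be adjacent to $w$, so $y$ and $w$ have a common neighbour $t$ with $N_G(y)=N_G(w)=\{x,z,t\}$. If $d_G(t)=2$, connectedness gives $G\cong\overline{P_2+P_3}$. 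If $d_G(t)=3$, with $t'$ its third neighbour, I would use that the only maximal cliques of $G$ through $x$ are $\{x,y,z\}$ and $\{x,z,w\}$: in any partition of $V(G)$ into strong cliques the part containing $x$ must be one of these, forcing the part containing $w$ (resp.\ $y$) to be $\{w,t\}$ (resp.\ $\{y,t\}$); but each of these is dominated by the independent set $\{x,t'\}$ and hence not strong, contradicting localizability.

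I expect the main obstacle to be the last step of each case: once the subcubic local analysis has pinned down a five-vertex core isomorphic to $\overline{P_2+P_3}$, one must rule out every way of attaching further vertices. The two devices I would rely on for this are (a) exploiting that the strong triangle $C$ meets every maximal independent set, played against an explicitly constructed maximal independent set that avoids it, and (b) using that in a localizable graph every $\alpha$-clique cover consists of strong cliques, and following the chain of cliques forced by knowing the maximal cliques through a vertex of degree $3$.
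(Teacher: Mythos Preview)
Your proof is correct and follows essentially the same route as the paper's: the same split into the cases $|C'|=2$ and $|C'|=3$, the same local analysis pinning down the five-vertex core, and the same device of tracing which maximal clique must occur in a strong-clique partition to rule out a sixth vertex. Your handling of the pendant subcase (swapping $z$ for $w$ in a maximal independent set and invoking well-coveredness to get a maximal independent set missing $C$) is a bit cleaner than the paper's, which instead argues that the other two vertices of $C$ must sit in strong cliques of size two and derives the contradiction from there; otherwise the two arguments coincide.
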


\begin{proof}
Let $G$, $C$ and $C'$ be as in the statement.
It is straightforward to verify that $G$ is localizable if $G\cong \overline{P_2+P_3}$. Suppose that $G$ is localizable, let $\mathcal{C}$ be a partition of $V$ into strong cliques, and let $C=\{a,b,c\}$. Suppose first that $|C'|=3$. If $|C\cap C'|=1$, then the vertex in $C\cap C'$ would have degree at least 4, contrary to the assumption that $G$ is subcubic. Therefore, $|C\cap C'|=2$. We may, without loss of generality, suppose that $C\cap C'=\{a,b\}$. Let $d$ be the remaining vertex of $C'$. If there are no other vertices in $G$, then $G$ is isomorphic to the diamond graph ($K_4$ minus an edge), which is not localizable. Since $G$ is connected with $\Delta(G)=3$, it follows that one of $c$ and $d$ has a neighbor distinct from $a$ and $b$. If $c$ and $d$ are adjacent then $G\cong K_4$ contradicting the assumption that $C$ is a strong clique. Without loss of generality, we may assume that $x$ is the neighbor of $c$ different from $a$ and $b$. If $x$ and $d$ are non-adjacent, then we may extend $\{d,x\}$ to a maximal independent set $I$ in $G$ such that $I\cap C=\emptyset$, contrary to the assumption that $C$ is strong clique. We conclude that $x$ and $d$ are adjacent.  One of the cliques $C$ and $C'$ must be contained in $\mathcal{C}$, since they are the only strong cliques containing vertices $a$ and $b$. It is now clear that at least one of the cliques $\{c,x\}$ and $\{d,x\}$ must belong to $\mathcal{C}$. If $G$ has more than $5$ vertices, then since $G$ is connected with $\Delta(G)=3$, it follows that $x$ has a neighbor $y$ different from $c$ and $d$. It is clear that $y$ and $a$ are non-adjacent, since $a$ already has degree $3$. But now, any maximal independent set in $G$ containing $\{a,y\}$ is disjoint from both cliques $\{c,x\}$ and $\{d,x\}$, contrary to the assumption that one of them is in $\mathcal{C}$ and hence is strong. Thus, $G$ has exactly $5$ vertices, and it is obvious that $G\cong \overline{P_2+P_3}$.

Suppose now that $|C'|=2$.  Without loss of generality we may assume that $C\cap C'=\{a\}$ and $C'=\{a,d\}$. We claim that $d$ has degree at least 2. Suppose on contrary that $d$ has degree 1. Then the only strong clique containing $d$ is $\{a,d\}$, hence it must be contained in $\mathcal{C}$. This implies that $C\not \in \mathcal{C}$. Therefore, vertices $b$ and $c$ must be contained in some strong cliques different from $C$. By the first part of the proof, it follows that  $b$ and $c$ are contained in  strong cliques of size $2$ (otherwise we would have two strong triangles with non-empty intersection). Let $\{b,x\}$ and $\{c,y\}$ be  strong cliques containing $b$ and $c$, respectively. Since $C$ is a strong clique in $G$, it follows that $x$ and $y$ are adjacent. However, a maximal independent set containing $a$ and $x$ cannot intersect $\{c,y\}$, contradicting the assumption that $\{c,y\}$ is a strong clique. The obtained contradiction establishes our claim that $d$ has degree at least 2.

Observe that $d$ cannot be adjacent to $b$ or $c$, since $\{a,d\}$ is strong clique. Let $x$ be a neighbor of $d$ different from $a$. Since $\{a,d\}$ is a strong clique in $G$, it follows that $x$ is adjacent to both $b$ and $c$. If $d$ has third neighbor $y$, then $y$ would have to be adjacent to both $b$ and $c$ as well, contrary to the assumption that $\Delta(G)\leq 3$. Hence, $d$ has no other neighbor, and since $G$ is connected, it follows that $G$ is again isomorphic to $\overline{P_2+P_3}$.
\end{proof}

We are now ready to state the main result of this section.

\begin{theorem}
\label{thm:max3}
{\sc Strong Clique Partition Existence} (a.k.a.~{\sc Localizability})
is solvable in polynomial time in the class of subcubic graphs.
\end{theorem}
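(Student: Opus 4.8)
The plan is to turn \Cref{lem:subcubic} into a polynomial-time decision procedure by first reducing to connected graphs, then performing a short case split, and finally reducing the remaining work to a maximum matching computation.

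First I would use the obvious fact that a graph is localizable if and only if each of its connected components is, so we may assume $G$ is connected. Two cases are disposed of immediately: if $|V(G)|\le 1$ then $G$ is trivially localizable, and if $G\cong K_4$ then $G$ is localizable ($V(G)$ is a single strong clique). So assume henceforth that $G$ is connected, subcubic, has at least two vertices and is not $K_4$. Then $G$ has no isolated vertex, and it has no $K_4$ subgraph (in a subcubic graph a $K_4$ subgraph is a whole connected component), so $\omega(G)\le 3$; moreover, by \Cref{lem:strong-cliques}, a singleton $\{v\}$ is a strong clique only if $v$ is isolated, which is impossible here. Hence every strong clique of $G$ has size $2$ (a \emph{strong edge}) or size $3$ (a \emph{strong triangle}). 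By \Cref{cor:delta-k} (with $k=3$), {\sc Strong Clique} is solvable in polynomial time on subcubic graphs, and since a subcubic graph has $O(|V(G)|)$ edges and $O(|V(G)|)$ triangles, we can compute in polynomial time the set of all strong edges and the set $\mathcal{T}$ of all strong triangles of $G$.

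Now the case split. If some strong triangle $T$ shares a vertex with a strong clique $C'\ne T$, then \Cref{lem:subcubic} applies directly and tells us that $G$ is localizable if and only if $G\cong\overline{P_2+P_3}$, which is testable in constant time; we answer accordingly and stop. Otherwise no strong triangle meets any other strong clique; in particular the strong triangles are pairwise vertex-disjoint and are vertex-disjoint from all strong edges. Put $U=\bigcup_{T\in\mathcal{T}}V(T)$ and let $H$ be the graph with vertex set $V(G)\setminus U$ whose edge set consists of exactly those strong edges of $G$ that have both endpoints outside $U$. I claim that $G$ is localizable if and only if $H$ has a perfect matching. For the forward direction: in any partition $\mathcal{C}$ of $V(G)$ into strong cliques, each $T\in\mathcal{T}$ must itself be a member of $\mathcal{C}$ (every strong clique meeting $V(T)$ equals $T$, by the case assumption); deleting these members, the rest covers $V(G)\setminus U$, and each remaining member, being a strong clique of $G$ that is neither a strong triangle nor a singleton, is a strong edge, which moreover must avoid $U$ — so these members form a perfect matching of $H$. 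For the backward direction: if $M$ is a perfect matching of $H$ then $\mathcal{T}\cup M$ is a partition of $V(G)$ into strong cliques of $G$. Since $\mathcal{T}$, $U$ and $H$ are computable in polynomial time and a perfect matching can be found in polynomial time (Edmonds), the whole procedure runs in polynomial time.

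The substantive content behind this plan is \Cref{lem:subcubic} itself, which pins down the rigid local structure forced when a strong triangle shares a vertex with another strong clique; granting that lemma, the two points needing care are (i) the argument that in the absence of such ``shared'' strong triangles every strong triangle is forced to appear in every strong clique partition, so that what is left is exactly a perfect-matching instance on $H$, and (ii) the bookkeeping that in the reduced situation every remaining strong clique is a strong edge lying in $V(G)\setminus U$ — both of which follow from $\omega(G)\le 3$ and the absence of isolated vertices. I expect the genuine difficulty to lie entirely in \Cref{lem:subcubic}; once it is available, the main risk in the proof of the theorem is merely keeping track of the degenerate small graphs (notably $K_4$ and $\overline{P_2+P_3}$) that have to be handled or recognized separately.
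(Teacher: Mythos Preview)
Your proposal is correct and follows essentially the same approach as the paper's own proof: reduce to connected graphs, handle the case $G\cong K_4$, use \Cref{lem:subcubic} to dispose of the situation where a strong triangle meets another strong clique, and then reduce the remaining case to a perfect-matching test in the auxiliary graph whose vertices are those not covered by strong triangles and whose edges are the strong edges of $G$. Your write-up is in fact slightly more explicit than the paper's in justifying why $\omega(G)\le 3$ and why singletons cannot be strong, but the overall argument is the same.
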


\begin{proof}
Let $G=(V,E)$ be a subcubic graph. Since a graph is localizable if and only if each of its connected components is localizable, we may assume without loss of generality that $G$ is connected and non-trivial. Since the maximum degree of $G$ is at most 3, we can find all strong cliques of $G$ in polynomial time (we need to check all subsets of size at most 3, and the subgraphs induced by their neighborhoods). If there exists a strong clique of size 4, then $G\cong K_4$, hence localizable. Otherwise, all strong cliques in $G$ are of sizes $2$ or $3$. If there exists a strong clique of size 3 with non-empty intersection with another strong clique in $G$, then by \Cref{lem:subcubic}, $G$ is localizable if and only if $G\cong \overline{P_2+P_3}$.

Hence we may assume from now on that every  strong clique of size $3$ is disjoint with every other strong clique in $G$. Therefore, if $G$ is localizable, then every strong clique of size 3 is contained in every partition of $V$ into strong cliques. We now form a new graph $G'$ formed by vertices of $G$ that do not belong to strong cliques of size $3$. Two vertices of $G'$ are adjacent if they form a strong clique of size $2$ in $G$.  If there exists a perfect matching $M$ in $G'$, then this matching, together with all strong cliques of $G$ of size $3$ is a partition of $V$ into strong cliques. Similarly, if there exists a partition of $V$ into strong cliques, then it is clear that the set of the cliques of size $2$ in this partition induces a perfect matching in $G'$. Hence $G$ is localizable if and only if $G'$ admits a perfect matching. Verifying if $G'$ has a perfect matching can be done in polynomial time (see for instance \cite{edmonds}).
\end{proof}

In the case of cubic (that is, $3$-regular) graphs, the result of \Cref{thm:max3} can be strengthened to a complete classification. This can be derived using the classification of well-covered cubic graphs due to Campbell et al.~\cite{MR1220613}. The list consists of three infinite families together with six exceptional graphs. By testing each of the graphs in the list for localizability, the classification of localizable cubic graphs given by \Cref{thm:3-regular} can be derived. For an integer $n\ge 2$, let $F_n$ denote the graph obtained as follows: take a cycle $v_1v_2\ldots v_n v_1$ of length $n$ (if $n=2$, then $v_1v_2v_1$  is a cycle of length two with two parallel edges), replace every vertex $v_i$ of the cycle by vertices $x_i,x'_i,y_i,y'_i,z_i,z'_i$ inducing the graph $F$ (see \Cref{fig:F}); replace each edge $v_iv_{i+1}$ ($i=1,\ldots,n-1$) of the cycle by an edge $x_i x'_{i+1}$, finally replace the edge $v_nv_i$ by the edge $x_nx'_1$ (see \Cref{fig:F4} for an example).

\begin{theorem}\label{thm:3-regular}
Let $G$ be a connected cubic  graph. Then,
$G$ is localizable if and only if $G$ is isomorphic to one of the graphs in the set $\{K_{3,3},K_4,\overline{C_6}\}\cup\{F_n \mid n\ge 2\}$ (see~\Cref{fig:cubic}).
\end{theorem}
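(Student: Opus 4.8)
The plan is to combine the classification of connected well-covered cubic graphs due to Campbell et al.~\cite{MR1220613} with the equivalences collected in~\Cref{prop:alpha-chi-bar}. Since every localizable graph is well-covered (implication (a)$\Rightarrow$(d) of~\Cref{prop:alpha-chi-bar}), every connected localizable cubic graph occurs in the Campbell et al.~list, which consists of three infinite families together with six exceptional graphs. It therefore suffices to test each graph on that list for localizability and to check that the graphs passing the test are exactly $K_4$, $K_{3,3}$, $\overline{C_6}$, and $F_n$ for $n\ge 2$. Two observations make the test manageable: (i) every strong clique is a maximal clique, and in a connected cubic graph other than $K_4$ every maximal clique is an edge or a triangle, so a partition of the vertex set into strong cliques consists of strong edges and strong triangles; (ii) by~\Cref{lem:strong-cliques}, a clique $C$ in a cubic graph fails to be strong precisely when some independent set $I\subseteq N_G(C)$ with $|I|\le|C|\le 3$ dominates $C$, a bounded, local condition.

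For the \emph{if} direction I would give an explicit partition into strong cliques for each of the four types. The graph $K_4$ is $1$-localizable, its whole vertex set being a strong clique. For $K_{3,3}$, any perfect matching is a partition into three strong cliques, since each edge meets both sides of the bipartition and the two sides are the only maximal independent sets. For $\overline{C_6}$, which is the triangular prism, the two triangles partition the vertex set and each meets every maximal independent set (the six maximal independent sets being the edges of the complementary $6$-cycle). For $F_n$, using the building block $F$ (\Cref{fig:F}) I would fix a partition of each copy of $F$ into edges and triangles, take the induced partition of $V(F_n)$, and verify via~\Cref{lem:strong-cliques} that every clique in it is strong; since the only inter-copy edges are the $x_i x'_{i+1}$, the closed neighborhood of each such clique lies in its own copy of $F$ together with at most two vertices of adjacent copies, so whether a clique is strong is determined by three consecutive copies, and the verification is finite and uniform in~$n$.

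For the \emph{only if} direction I would rule out every remaining graph on the Campbell et al.~list. By~\Cref{prop:alpha-chi-bar} (using (a)$\Leftrightarrow$(e) together with $i(G)\le\alpha(G)\le\theta(G)$ and $i(G)=\alpha(G)$ for well-covered $G$), a well-covered graph is localizable if and only if it is semi-perfect, i.e.\ $\theta(G)=\alpha(G)$. This settles all triangle-free members of the list except $K_{3,3}$: in a connected triangle-free cubic graph $G$ every clique is an edge or a vertex, so $\theta(G)=|V(G)|-\nu(G)\ge|V(G)|/2$, where $\nu(G)$ is the matching number; on the other hand $\alpha(G)\le|V(G)|/2$ always, with equality only if $G$ is bipartite, and a connected bipartite cubic graph that is well-covered must be $K_{3,3}$ (each side is a maximal independent set, and well-coveredness forces $G$ to be a disjoint union of copies of $K_{3,3}$). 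Hence every triangle-free member of the list other than $K_{3,3}$ has $\theta(G)>\alpha(G)$ and is not localizable. The triangle-containing members still to be examined are $K_4$, $\overline{C_6}$, the family $\{F_n\}$, and finitely many sporadic graphs; for each sporadic graph $H\notin\{K_4,\overline{C_6}\}$ one checks either that $\theta(H)>\alpha(H)$, or that, by~\Cref{prop:alpha-chi-bar}(c), $H$ has an $\alpha$-clique cover containing a clique dominated by a disjoint independent set and hence not strong. Combining both directions gives the classification (\Cref{fig:cubic}).

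The main obstacle is the bookkeeping: one must reconcile the description of the three infinite families of Campbell et al.~with the present notation, recognize which of them is a re-description of $\{F_n\}$, and make sure the test is carried out along the other two families \emph{uniformly} in the family parameter. For triangle-free families the non-semi-perfectness argument above does this in one stroke; should a remaining infinite family contain triangles, a short uniform argument producing, for each of its members, an $\alpha$-clique cover with a non-strong clique would be required instead. Beyond that, everything reduces to the finite, local checks via~\Cref{lem:strong-cliques} sketched above for the sporadic graphs and for the four surviving types.
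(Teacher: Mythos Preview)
Your approach is correct and is in fact the one the paper itself explicitly mentions as a viable derivation just before stating the theorem: start from the Campbell--Ellingham--Royle classification of connected well-covered cubic graphs and filter it using \Cref{prop:alpha-chi-bar}. Your semi-perfectness argument for the triangle-free case is clean and correct (for a connected $r$-regular graph $\alpha\le |V|/2$ with equality iff bipartite, and a connected cubic bipartite well-covered graph is $K_{3,3}$ by Ravindra's characterization), and your local verification scheme via \Cref{lem:strong-cliques} for the surviving families is sound in principle, though you rightly flag that the remaining two infinite families and the sporadic graphs still require explicit treatment.

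The paper, however, does \emph{not} carry out this filtering. Instead it gives a short direct proof that bypasses the Campbell et al.\ classification entirely, simultaneously strengthening the statement (this is \Cref{thm:3-regular-improved}): it shows that for a connected cubic graph, localizability, the weaker condition that every vertex lies in a strong clique, and membership in $\{K_{3,3},K_4,\overline{C_6}\}\cup\{F_n\mid n\ge 2\}$ are all equivalent. The key ingredients are a lemma (\Cref{prop:trianglefree-regular}) showing that a connected $r$-regular graph has a strong edge only if it is $K_{r,r}$, and then a structural analysis of the case where every vertex lies in a strong triangle, which forces the graph to be built from copies of $F$ glued along a cycle. What your route buys is modularity (the heavy lifting is outsourced to \cite{MR1220613}); what the paper's route buys is self-containment, a stronger theorem, and no need to reconcile the Campbell et al.\ families with the present notation.
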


\begin{figure}[!ht]
\centering
\begin{minipage}{.35\textwidth}
  \centering
  \includegraphics[width=0.6\linewidth]{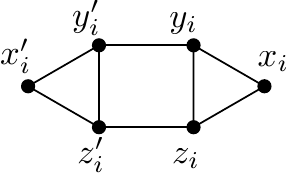}
  \captionof{figure}{The graph $F$}
  \label{fig:F}
\end{minipage}%
\hspace{1cm}
\begin{minipage}{.4\textwidth}
  \centering
  \includegraphics[width=\linewidth]{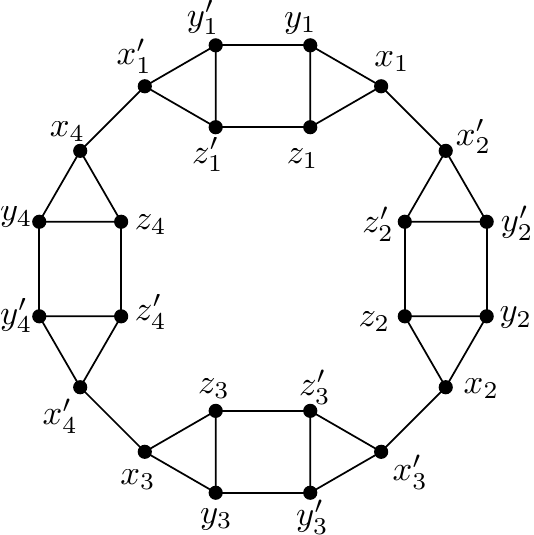}
  \captionof{figure}{The graph $F_4$}
  \label{fig:F4}
\end{minipage}
\end{figure}

\begin{figure}[!ht]
  \centering
  \includegraphics[width=\linewidth]{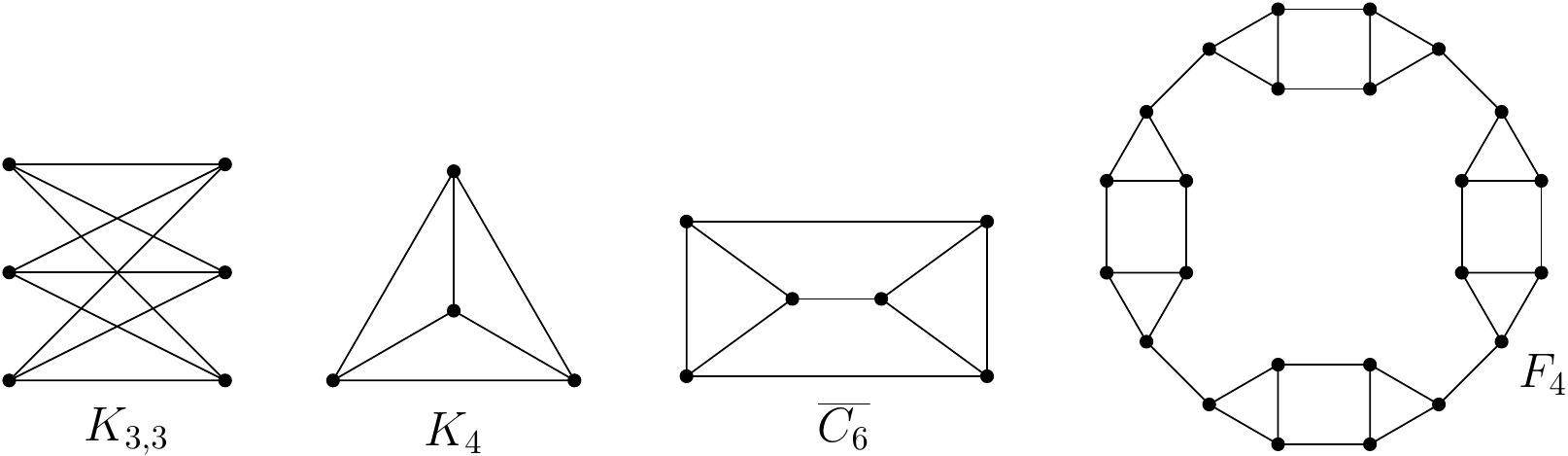}
  \captionof{figure}{Cubic localizable graphs: $K_{3,3}$, $K_4$, $\overline{C_6}$, and an infinite family $\{F_n \mid n\ge 2\}$}
  \label{fig:cubic}
\end{figure}

%See also~\cite{} for a short direct proof of this result.

We now give here a short direct proof of~\Cref{thm:3-regular} and strengthen it further by adding to the list the property that
every vertex of $G$ is contained in a strong clique. First we develop a property of strong cliques in general regular graphs.

\begin{lemma}\label{prop:trianglefree-regular}
Let $r\ge 1$ and let $G$ be a connected $r$-regular graph. Then $G$ has a strong clique of size two if and only if $G\cong K_{r,r}$.
\end{lemma}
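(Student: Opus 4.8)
The plan is to translate ``strong clique of size two'' into the domination language of \Cref{lem:strong-cliques}: the edge $\{u,v\}$ is a strong clique in $G$ if and only if there is no independent set $I\subseteq V(G)\setminus\{u,v\}$ with $\{u,v\}\subseteq N_G(I)$, i.e.\ no independent set $I$ disjoint from $\{u,v\}$ that meets both $N_G(u)\setminus\{v\}$ and $N_G(v)\setminus\{u\}$. The backward implication is then immediate: in $K_{r,r}$ (for $r\ge 1$) the only maximal independent sets are the two sides of the bipartition, and every edge has exactly one endpoint on each side, so every edge is a strong clique.

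For the forward direction, suppose $\{u,v\}$ is a strong clique and set $A=N_G(u)\setminus\{v\}$ and $B=N_G(v)\setminus\{u\}$, so that $|A|=|B|=r-1$ by $r$-regularity. First, $A\cap B=\emptyset$: a common neighbor $w$ of $u$ and $v$ would make $\{w\}$ an independent set disjoint from $\{u,v\}$ that dominates it, contradicting \Cref{lem:strong-cliques}. Second, every vertex of $A$ is adjacent to every vertex of $B$: otherwise we could choose non-adjacent $a\in A$ and $b\in B$ (here $a\neq b$ since $A\cap B=\emptyset$, and $a,b\notin\{u,v\}$), and $\{a,b\}$ would be an independent set dominating $\{u,v\}$. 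Now count degrees: each $a\in A$ is adjacent to $u$ and to all $r-1$ vertices of $B$, which already accounts for all $r$ of its neighbors, so $N_G(a)=\{u\}\cup B$; symmetrically $N_G(b)=\{v\}\cup A$ for each $b\in B$, and of course $N_G(u)=\{v\}\cup A$ and $N_G(v)=\{u\}\cup B$.

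Hence $S:=\{u,v\}\cup A\cup B$ is closed under taking neighborhoods, so connectivity of $G$ forces $V(G)=S$. Writing $X=\{u\}\cup B$ and $Y=\{v\}\cup A$, we get $|X|=|Y|=r$, and the neighborhood description just obtained shows there are no edges within $X$ or within $Y$ while every vertex of $X$ is adjacent to every vertex of $Y$; therefore $G\cong K_{r,r}$. The degenerate case $r=1$ is subsumed, since then $A=B=\emptyset$ and $G=K_2=K_{1,1}$. The only mildly delicate point is the degree count: one must check that the forced adjacencies ($a$ to $u$ and to all of $B$) already exhaust the degree of each vertex of $A$ and $B$, so that no additional edges can exist; once this is secured, recognizing $G$ as $K_{r,r}$ and disposing of the case $r=1$ are routine.
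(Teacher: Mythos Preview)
Your proof is correct and follows essentially the same approach as the paper's: define $A=N(u)\setminus\{v\}$ and $B=N(v)\setminus\{u\}$, argue $A\cap B=\emptyset$ and that $A$ is complete to $B$, then use $r$-regularity and connectedness to conclude $G\cong K_{r,r}$. The only cosmetic differences are that you invoke \Cref{lem:strong-cliques} directly (the paper instead extends to a maximal independent set, and derives $A\cap B=\emptyset$ from maximality of the clique rather than from a one-vertex dominating set), and you spell out the degree count and the bipartition $(\{u\}\cup B,\{v\}\cup A)$ explicitly where the paper leaves this to the reader.
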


\begin{proof}
Suppose that $G$ is a connected $r$-regular graph, and let $\{u,v\}$ be a strong clique in $G$. Since the clique $\{u,v\}$ is strong, it is a maximal clique and therefore it is not contained in any triangle. Let $A$ denote the set of neighbors of $u$ different from $v$, and let $B$ denote the set of neighbors of $v$ different from $u$. By definition, we have $A\cap B=\emptyset$. We claim that every vertex in $A$ is adjacent to every vertex in $B$. Suppose for a contradiction that some vertex $x\in A$ is not adjacent to some vertex $y\in B$. Then extending the set $\{x,y\}$ to a maximal independent set of $G$ results in a maximal independent not containing any of $u,v$, contradicting the assumption that $\{u,v\}$ is a strong clique. Therefore, every vertex in $A$ is adjacent to every vertex in $B$, as claimed.

Since $G$ is $r$-regular, $N(u) = A\cup \{v\}$, and $N(v) = B\cup \{u\}$, we have $|A|=|B|=r-1$. The connectedness of $G$ and the fact that $G$ is $r$-regular implies that $G\cong K_{r,r}$. The converse direction is immediate.
\end{proof}

\begin{theorem}\label{thm:3-regular-improved}
Let $G$ be a connected cubic  graph. Then, the following statements are equivalent:
\begin{enumerate}
  \item $G$ is localizable.
  \item Every vertex of $G$ is contained in a strong clique.
  \item $G$ is isomorphic to one of the graphs in the set $\{K_{3,3},K_4,\overline{C_6}\}\cup\{F_n \mid n\ge 2\}$ (see~\Cref{fig:cubic}).
\end{enumerate}
\end{theorem}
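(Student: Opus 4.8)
The plan is to prove the cycle of implications $(1)\Rightarrow(2)\Rightarrow(3)\Rightarrow(1)$. The implication $(1)\Rightarrow(2)$ is immediate: the cliques in any partition of $V(G)$ into strong cliques already cover every vertex. For $(3)\Rightarrow(1)$ I would exhibit, for each graph on the list, an explicit partition of its vertex set into strong cliques and verify strongness with \Cref{lem:strong-cliques}. For $K_4$ the whole vertex set is the only, hence strong, maximal clique; for $K_{3,3}$ any perfect matching works, since each of its three edges meets both sides and the sides are the only maximal independent sets; for $\overline{C_6}$, viewed as the triangular prism, the two triangular faces form the partition, and each is strong because its three outside-neighbours are precisely the vertices of the opposite face, which is a clique; and for $F_n$ the $2n$ triangles $\{x_i,y_i,z_i\}$ and $\{x_i',y_i',z_i'\}$ ($i=1,\dots,n$) partition $V(F_n)$, each being strong because two of its three outside-neighbours lie in a common triangle of the other type --- a single computation handling the whole family.

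The bulk of the work is $(2)\Rightarrow(3)$. Assume $G$ is connected and cubic and every vertex lies in a strong clique. A cubic graph has $\omega(G)\le 4$, and strong cliques are maximal, so every strong clique has size $2$, $3$ or $4$. A strong clique of size $4$ forces $G\cong K_4$ (no edge can leave it), and a strong clique of size $2$ forces $G\cong K_{3,3}$ by \Cref{prop:trianglefree-regular}; so assume from now on that every strong clique is a triangle. By hypothesis every vertex lies in a strong triangle, and two strong triangles sharing a vertex would give that vertex degree at least $4$; hence the strong triangles are pairwise disjoint and therefore partition $V(G)$. (Here the degree bound replaces the use of \Cref{lem:subcubic}.) Each triangle of this partition has exactly three edges leaving it, ending in three \emph{distinct} vertices, since a vertex inside another triangle has only one neighbour outside its own triangle.

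Next I would analyse a triangle $T=\{a,b,c\}$ of the partition by where its three outside edges go. A short application of \Cref{lem:strong-cliques} shows $T$ is strong if and only if its three outside-neighbours do not form an independent set, i.e.\ at least two of them are adjacent. If those three edges went to three pairwise distinct triangles, no two outside-neighbours of $T$ could be adjacent --- such a neighbour $w$, lying in a triangle $T_w\neq T$, has its unique partner in $T$ as its only neighbour outside $T_w$ --- so $T$ would fail to be strong, a contradiction. Hence the three edges leaving $T$ either all go to one triangle $T'$ (a triple edge), or two go to a triangle $T'$ and one to a third triangle $T''$. In the triple-edge case $T\cup T'$ is a connected component isomorphic to the triangular prism, so $G\cong\overline{C_6}$. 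Otherwise every triangle carries exactly one double edge, so ``being double-edge partners'' is a fixed-point-free involution, i.e.\ a perfect matching, on the set of triangles; contracting each partner pair to a super-vertex and keeping the single edges yields a $2$-regular multigraph which is loopless (a loop would upgrade a double edge to a triple edge) and connected, hence a cycle of length $n\ge 2$. A routine check --- using that the two free vertices of a super-vertex are interchanged by an automorphism and point in opposite cyclic directions, so that no monodromy obstructs a uniform labelling of the connecting edges as $x_ix_{i+1}'$ --- then identifies $G$ with $F_n$.

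The main obstacle is this last identification: one has to pin down the combinatorics of how the gadgets glue around the cycle tightly enough to rule out spurious non-isomorphic configurations. The conceptual heart of the argument, however, is the observation that the three outside edges of a strong triangle can never reach three different triangles; once that is in hand, the rest is bookkeeping with the triangle partition together with the two small finite cases $K_4$ and $K_{3,3}$.
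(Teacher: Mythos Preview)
Your overall plan matches the paper's: the same cycle $(1)\Rightarrow(2)\Rightarrow(3)\Rightarrow(1)$, with $(2)\Rightarrow(3)$ carrying the weight. Your organisation of that implication---first obtaining a partition of $V(G)$ into strong triangles, then analysing the inter-triangle edges globally via a quotient multigraph on matched pairs of triangles---is a tidy reformulation of what the paper does more sequentially, and the identification of $\overline{C_6}$ and $F_n$ at the end is essentially the same.

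There is, however, a genuine gap in the sentence ``two strong triangles sharing a vertex would give that vertex degree at least~$4$.'' This is false when the two triangles share an \emph{edge}: if $T_1=\{a,b,c\}$ and $T_2=\{a,b,d\}$ are both maximal (hence $c\not\sim d$), then $a$ has neighbours $b,c,d$ and $b$ has neighbours $a,c,d$, each of degree exactly~$3$. So the degree bound does not exclude this configuration, and your subsequent claim that the strong triangles partition $V(G)$---on which the entire inter-triangle analysis rests---is unproved. The paper disposes of exactly this case by a short direct argument (not via \Cref{lem:subcubic}, incidentally): if two vertices $x_1,y_1$ of a strong triangle $C_1$ have a common outside neighbour $w$, then $\{x_1,y_1,w\}$ is the unique triangle through $w$ and hence must be strong; forcing $w$'s remaining neighbour $w'$ to be adjacent to $z_1$ then leaves $w'$ lying in no triangle whatsoever, contradicting the hypothesis. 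You need an argument of this kind before your triangle-partition step; once it is in place, the rest of your proof goes through.
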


\begin{proof}
It immediately follows from the definition of localizable graphs that every vertex of a localizable graphs is contained in a strong clique. Therefore, statement $1$ implies statement~$2$.

It is also easy to check that each of the graphs in the set $\{K_{3,3},K_4,\overline{C_6}\}\cup\{F_n \mid n\ge 2\}$ is localizable. Indeed, each edge of $K_{3,3}$ is a strong clique; in $K_4$ the whole vertex set is a strong clique; in each of the remaining graphs each vertex is contained in a unique triangle and all these triangles are strong. Therefore, statement $3$ implies statement $1$.

Finally, we show that statement $2$ implies statement $3$. Since $G$ is cubic, it cannot have a clique of size $5$ or a strong clique of size $1$. If there exists a strong clique of size $2$ in $G$, then by~\Cref{prop:trianglefree-regular}, $G\cong K_{3,3}$. If there exists a strong clique of size $4$ in $G$, then $G\cong K_4$ since $G$ is cubic.

Suppose now that every vertex of $G$ belongs to a strong clique of size $3$. Let $C_1=\{x_1,y_1,z_1\}$ be a strong clique in $G$. All three vertices of $C_1$ cannot have a common neighbor, otherwise $C_1$ would not be a strong clique. Suppose that two vertices of $C_1$, say $x_1$ and $y_1$, have a common neighbor, say $w$, outside $C_1$. Let $w'\ne z_1$ be the remaining neighbor of $w$. Since the only clique of size $3$ that contains $w$ is $\{x_1,y_1,w\}$, it follows that $\{x_1,y_1,w\}$ is a strong clique. If $w'$ is not adjacent to $z_1$, then a maximal independent set in $G$ containing $\{w',z_1\}$ would be disjoint from $\{x_1,y_1,w\}$, contradicting the fact that $\{x_1,y_1,w\}$ is a strong clique. This implies that $w'$ is adjacent to $z_1$. However, in this case, vertex $w'$ does not belong to any triangle in $G$, which contradicts the assumption that every vertex of $G$ belongs to a strong clique of size $3$. This shows that no two vertices of $C_1$ have a common neighbor outside of $C_1$.

If the neighbors of $x_1,y_1,z_1$ outside of $C_1$ form an independent set $S$, then any maximal independent set in $G$ containing $S$ would be disjoint from $C_1$, a contradiction. Suppose now that $y'_1$ and $z'_1$ are the remaining neighbors of $y_1$ and $z_1$, and that $y'_1$ is adjacent to $z'_1$.
Since every vertex belongs to a triangle, it follows that $y'_1$ and $z'_1$ have a common neighbor, say $x'_1$. If $x_1$ is adjacent to $x'_1$ then $G\cong \overline{C_6}$. So we may assume that $x_1$ and $x'_1$ are not adjacent. It follows from the above that for any strong clique $C=\{u,v,w\}$ in $G$, there exist vertices $u',v',w'$ such that $vv', ww'\in E$, $\{u',v',w'\}$ is a strong clique and $u$ is not adjacent to $u'$.

Let $x'_2$ be the remaining neighbor of $x_1$. Since $x'_2$ belongs to a strong clique, it follows that there exist vertices $y'_2$ and $z'_2$ such that $C_2=\{x'_2,y'_2,z'_2\}$ is strong clique. Note that the fact that $G$ is cubic implies that $\{y'_2,z'_2\}\cap \{x_1,y_1,z_1,x_1',y_1',z_1'\} = \emptyset$.
Let $y_2$ and $z_2$ be the remaining neighbors of $y'_2$ and $z'_2$ respectively.
Then using the same argument as above, replacing $C$ with $C_2$ one can easily see that $y_2$ and $z_2$ are adjacent, and that they have a common neighbor, say $x_2$. If $x_2$ is adjacent with $x'_1$, then $G\cong F_2$. If $x_2$ and $x'_1$ are not adjacent, then let $x'_3$ be the remaining neighbor of $x_2$ and repeat the same argument as before. Since the graph is finite and connected, it follows that for some $n$, we will have that $x_n$ is adjacent with $x_1'$, hence $G\cong F_n$.
\end{proof}

\begin{sloppypar}
We conclude the section by noting that, as observed in~\cite{Hujd}, the equivalence between statements~1 and 2 in \Cref{thm:3-regular-improved} cannot be generalized to arbitrary regular graphs. A small $8$-regular counterexample is given by the line graph of $K_6$. Furthermore, in~\cite{Hujd} non-localizable regular graphs are constructed in which \emph{every} maximal clique is strong.
\end{sloppypar}

%====================================================================================

\section{Conclusion}

\begin{sloppypar}
In this work we performed a detailed study of the complexity of six algorithmic decision problems related to strong cliques in graphs, with a focus on the classes of weakly chordal graphs, line graphs and their complements, and subcubic graphs. Our results, summarized in~\Cref{table-results}, leave open the complexity status of
{\sc Strong Clique Edge Cover} (a.k.a.~{\sc General Partitionability})
not only for general graphs (which is a known open problem, see, e.g.,~\cite{MR3474710,MR3575013,MR2080087})
but also for the class of weakly chordal graphs.
As a corollary of our matching-based study of strong cliques in the class of complements of line graphs, we showed that {\sc Strong Clique Partition Existence} (a.k.a.~{\sc Localizability}) is \NP-complete in the class of graphs of independence number $k$, for every fixed $k\ge 3$.
As a natural question for future research, it would be interesting to perform a more detailed complexity study of problems related to strong cliques in graphs of bounded independence number.
\end{sloppypar}

%====================================================================================

\subsection*{Acknowledgements}

\begin{sloppypar}
The authors are grateful to Mark Ellingham for providing them with a copy of Ramey's PhD thesis~\cite{MR2691098} and of paper~\cite{MR1220613}.
This work is supported in part by the Slovenian Research Agency (I$0$-$0035$,
research program P$1$-$0285$ and research projects N$1$-$0032$, N$1$-$0038$, N$1$-$0062$, J$1$-$6720$, J$1$-$7051$, and J$1$-$9110$).
\end{sloppypar}

%====================================================================================

%%%%%%%%%%%%%%%%%%%%%%%%%% BIBLIOGRAPHY %%%%%%%%%%%%%%%%%%%%%
%\newpage{}
%\scriptsize{}
%\bibliography{squco-bib}

\end{document}